\newcommand{\halfp}{[\frac p2]}
\newcommand{\Z}{{\mathbb Z}}
\newcommand{\C}{{\mathbb C}}
\newcommand{\N}{{\mathbb N}}
\newcommand{\U}{{\mathcal U}}
\newcommand{\zp}{\Z_+}
\newcommand{\M}{\mathbb M}
\newcommand{\e}{{\bf e}}
\def\g{\mathfrak g}
\def\h{\mathfrak h}
\newcommand{\p}{\partial}
\newcommand{\vf}{\varphi}
\newcommand{\ov}{\overline}
\newcommand{\dt}{\delta}
\newcommand{\pz}{p\Z}
\renewcommand{\L}{\mathfrak{g}_{p}}
\newcommand{\el}{\underline{\ell}}
\newcommand{\ddd}{\underline{d}}
\newcommand{\0}{\underline{0}}
\newcommand{\ur}{\underline{r}}
\newcommand{\kkk}{k_1,\dots,k_{p-1}}
\newcommand{\rrr}{r_1,\dots,r_{p-1}}
\newcommand{\catn}{{\mathcal N(k,\underline{d})}}
\newcommand{\reorder}{\stackrel{\text{r}}{>}}
\newcommand{\ljp}[1]{L_{j+p( #1-d_j)}}
\newcommand{\bff}[1]{{\bf{#1}}}
\newcommand{\weit}[1]{{\bf{w}(\bff #1)}}
\newcommand{\prodl}[1]{L^{\bff #1}}
\newcommand{\vone}[1]{v_{\bff #1}}
\newcommand{\vtwo}[2]{v_{\bff #1,\bff #2}}
\newcommand{\mm}{\M'\times\M}
\newcommand{\supp}[1]{\text{supp}(#1)}
\newcommand{\indn}{\text{Ind}_{\underline{\ell}}(N)}
\newcommand{\spanc}[1]{\mathrm{span}_\C\left\{#1\right\}}
\newtheorem{thm}{Theorem}[section]
\newtheorem{prop}[thm]{Proposition}
\newtheorem{lem}[thm]{Lemma}
\newtheorem{cor}[thm]{Corollary}
\newtheorem{rmk}[thm]{Remark}
\newtheorem{definition}[thm]{Definition}
\begin{document}

	\begin{center}
		{\Large \bf  Restricted modules for gap-$p$ Virasoro algebra
and twisted modules for certain vertex algebras}
	\end{center}

\begin{center}
		{Hongyan Guo$^{a}$
			\footnote{Partially supported by
				NSFC (No.11901224)
				and NSF of Hubei Province (No.2019CFB160)},
Chengkang Xu$^{b}$
\footnote{Corresponding author. Partially supported by
				NSFC (No.11801375)
			}
			\\
			$\mbox{}^{a}$ School of Mathematics and Statistics,
and Hubei Key Laboratory of Mathematical Sciences,
			Central China Normal University, Wuhan 430079, China\\
$\mbox{}^{b}$ School of Mathematical Sciences, Shangrao Normal University, Shangrao 334100, China
		}

	\end{center}

\begin{abstract}
This paper studies restricted modules of
gap-$p$ Virasoro algebra $\L$ and their intrinsic
connection to twisted modules
of certain vertex algebras.
We first establish an equivalence between
the category of restricted $\L$-modules of level $\el$
and the category of twisted modules
of vertex algebra $V_{\mathcal{N}_{p}}(\el,0)$,
where $\mathcal{N}_{p}$ is a new Lie algebra,
$\el:=(\ell_{0},0,\cdots,0)\in\C^{\halfp+1}$,
 $\ell_{0}\in\C$ is the action of the Virasoro center.
Then we focus on the construction and
classification of simple restricted $\L$-modules
of level $\el$.
More explicitly,
we give a uniform construction of simple restricted $\L$-modules
as induced modules.
We present several equivalent characterizations
of simple restricted $\L$-modules, as locally nilpotent
(equivalently, locally finite) modules with respect to certain positive part of $\L$.
Moreover, simple restricted $\L$-modules of level $\el$ are classified.
They are either highest weight modules or simple induced modules.
At the end, we exhibit several concrete
examples of simple restricted $\L$-modules of level $\el$
(including Whittaker modules).\\
{\bf Keywords}: gap-$p$ Virasoro algebra; restricted module; vertex algebra; twisted module; locally nilpotent.

	\end{abstract}

\section{Introduction}
	\def\theequation{1.\arabic{equation}}
	\setcounter{equation}{0}

{\em Gap-$p$ Virasoro algebra} $\L$ ($p\in\Z_{\geq 2}$)
is the universal central extension of the Lie algebra of some
derivations over the Laurent polynomial ring
$\C[t,t^{-1}]$ of order at most one (cf. \cite{ACKP}, \cite{Xu}):
$$\mbox{span}\{t^{m+1}\frac{d}{dt},\; t^{r}\ | \ m\in p\Z,\;r\in\Z\backslash p\Z\}.$$
Following the notation used in \cite{Xu}, denote
$L_{m}=t^{m+1}\frac{d}{dt}$ for $m\in p\Z$ and $L_{r}=t^{r}$ for $r\in\Z\backslash p\Z$.
Then $\L$ is a complex Lie algebra with basis
$\{L_m,C_i\mid m\in\Z,0\leq i\leq \halfp\}$,
 subjecting to commutation relations
\begin{equation}\label{eq1.1}
	\begin{aligned}
		&[L_m,L_n]=(n-m)L_{m+n}+\dt_{m+n,0}\frac{1}{12}\left(\left(\frac{m}{p}\right)^3-\frac{m}{p}\right)C_0;\\
		&[L_m,L_r]=rL_{m+r};\ \ \ \ [L_r,L_s]=r\dt_{r+s,0}C_{\widetilde r};\ \ \ \
		[C_i,\L]=0,
	\end{aligned}
\end{equation}
where $m,n\in \pz, r,s\notin \pz, 0\leq i\leq\halfp, \widetilde{r}=\min\{\overline r,p-\overline r\},\overline{r}$ is the remainder of $r$ on division by $p$, and $\halfp$ is the largest integer no more than $\frac p2$.
Irreducible Harish-Chandra $\L$-modules are classified in \cite{Xu}.
They are either highest weight modules, lowest weight modules, or modules of intermediate series.

In the present paper, we study restricted modules for the gap-$p$ Virasoro algebra.
Restricted modules for Lie algebras are interesting and important.
On the one hand, from many known examples
(cf. \cite{FZhu}, \cite{G1}, \cite{G2}, \cite{GW}, \cite{L1}, \cite{LL}, etc.),
restricted modules of Lie algebras are closely related
to vertex algebras and their appropriate representations (e.g. modules, quasi modules, twisted modules, etc.).
On the other hand, several recent works show that simple restricted modules of Lie algebras
can be characterized as locally finite (or locally nilpotent) modules
over certain subalgebras.
And they can be constructed as induced modules,
their classifications are also possible
(cf. \cite{CG}, \cite{G2}, \cite{LPX}, \cite{LPXZ}, \cite{MZ2}, etc.).

To relate restricted $\L$-modules with vertex algebras and their representations,
we introduce a Lie algebra $\mathcal{N}_{p}$, which is
spanned by elements $T_{m}$, $N^{i}_{m}$ for $m\in\Z$,
$i=1,\ldots,p-1$ and central elements $K_{0}, \ldots, K_{\halfp}$, subjecting
to the Lie brackets
\begin{equation}\label{eq1.2}
	\begin{aligned}
		&[T_{m},T_{n}]=(m-n)T_{(m+n)}+\frac{1}{12}(m^{3}-m)\delta_{m+n,0}K_{0};\\
		&[T_{m},N^{i}_{n}]=-n N^{i}_{m+n};
		\ \ \ \
		[N^{i}_{m},N^{j}_{n}]=m\delta_{i+j,p}\delta_{m+n,0}K_{i},
	\end{aligned}
\end{equation}
where $m,n\in\Z$, $i,j=1,\ldots,p-1$,
$K_{i}:=K_{p-i}$ for $i>\halfp$.
Note that $\mathcal{N}_{2}$ is the
twisted Heisenberg-Virasoro algebra.
Structures and representations of the twisted Heisenberg-Virasoro
algebra have been studied widely (cf. \cite{ACKP}, \cite{AR1},
\cite{AR2}, \cite{B}, \cite{CG}, \cite{GW}, \cite{G1}).
For any $\el:=(\ell_{0},\ldots,\ell_{\halfp})\in\C^{\halfp+1}$,
we construct a vertex algebra $V_{\mathcal{N}_{p}}(\el,0)$
from $\mathcal{N}_{p}$.
There is a natural automorphism $\sigma_{p}$
of $\mathcal{N}_{p}$ defined by
$$\sigma_{p}(T_{m})=T_{m},\;\;\sigma_{p}(N^{i}_{m})=\xi_{p}^{m}N^{i}_{m},\;\;
\sigma_{p}(K_{0})=K_{0},\;\;\sigma_{p}(K_{i})=\xi_{p}^{2}K_{i}$$
for $m\in\Z,\; i=1,\ldots,p-1,$
where $\xi_{p}$ is a primitive $p$-th root of unity.
When $p=2$, $\sigma_{2}$-twisted $V_{\mathcal{N}_{2}}(\el,0)$-modules
have been studied in \cite{G1}.
For $p\geq 3$,
the automorphism $\sigma_{p}$ induces to an automorphism of $V_{\mathcal{N}_{p}}(\el,0)$
when $\ell_{1}=\cdots=\ell_{\halfp}=0$.
Fix $\el=(\ell_{0},0,\ldots,0)\in\C^{\halfp+1}$ for the rest
of the section.
Our first main result is that restricted $\L$-modules of level $\el$
are in one-to-one correspondence to
$\sigma_{p}$-twisted $V_{\mathcal{N}_{p}}(\el,0)$-modules.

Next we study simple restricted $\L$-modules of level
$\el$.
For any $\ddd=(d_{1},\ldots,d_{p-1})\in\Z^{p-1}$,
consider a subalgebra $\L^+(\ddd)$ of $\L$ (see (\ref{eq:gd})).
For certain $\L^+(\ddd)$-module $N$ (see Definition \ref{modN}),
we form the induced module
$$\indn=\U(\L)\otimes_{\U(\L^+(\ddd))}N.$$
We show that under certain conditions,
the induced modules $\indn$ are simple (Theorem \ref{thmsimp}).
 Then we give several equivalent characterizations
 of simple restricted $\L$-modules,
 e.g. as locally finite modules,
 or locally nilpotent modules (see Theorem \ref{thmeqch}).
 Furthermore,
 we prove that simple restricted $\L$-modules
 of level $\el$ are either highest weight modules,
 or simple induced modules $\indn$.
 At last, we exhibit some concrete examples of simple restricted $\L$-modules,
 including highest weight modules and Whittaker modules.

 The paper is organized as follows.
In Section 2, we study gap-$p$ Virasoro
algebra in the context of vertex algebras and their twisted modules.
In Section 3, we construct a large class of simple restricted $\L$-modules as induced modules.
In Section 4, we give several equivalent characterizations of simple restricted $\L$-modules.
They are locally nilpotent (equivalently, locally finite) with respect to certain subalgebras of $\L$.
Then we prove that simple restricted $\L$-modules
of level $\el$ are either highest weight modules or
 induced modules.
 Finally, in Section 5,
 we present some concrete examples of simple restricted $\L$-modules.

Throughout this paper, the symbols $\C$,
$\Z$, $\zp$, $\N$ represent the
 set of complex numbers, integers, positive integers,
 nonnegative integers, respectively.
 For a Lie algebra $L$, $\mathcal U(L)$ denotes the universal enveloping algebra of $L$.
 For narative simplicity,
we shall use the notation
that $C_{i}=C_{p-i}$ and
$K_{i}=K_{p-i}$ for $i>\halfp$ without confusion.

\section{Gap-$p$ Virasoro algebra and vertex algebras}
	\def\theequation{2.\arabic{equation}}
	\setcounter{equation}{0}

In this section, we relate the algebra $\L$ with vertex algebras and their
twisted modules.
More precisely, we construct a new Lie algebra $\mathcal{N}_{p}$
and the associated vertex algebras $V_{\mathcal{N}_{p}}(\el,0)$.
For $p\geq 3$, $\el:=(\ell_{0},0,\ldots,0)$,
we show that
restricted $\L$-modules of level $\el$
are equivalent to $\sigma_{p}$-twisted modules of the
vertex algebra $V_{\mathcal{N}_{p}}(\el,0)$,
 where $\sigma_{p}$
is an order $p$ automorphism of the vertex algebra
 $V_{\mathcal{N}_{p}}(\el,0)$.
 Note that the case of $p=2$ is studied in \cite{G1}.

In order to relate $\L$ with vertex algebras,
we rewrite the definition of $\L$ in the following way.
 For $m\in\Z$, $i=1,\ldots,p-1$,
 denote by
\begin{eqnarray}
L(m)=-\frac{1}{p}L_{pm},\;\; I^{i}(m)=-\frac{1}{p}L_{pm+i},
\;\;\ov{C}_{i}=\frac{C_{i}}{p},
\;\;\ov{C}_{0}=\frac{C_{0}}{p^{2}}.
\end{eqnarray}
Then it is straightforward to check
\begin{lem}\label{lem2.1}
The gap-$p$ Virasoro algebra $\L$ is spanned
by the basis elements $L(m)$,
$I^{i}(m)$, $\ov{C}_{j}$ for $m\in\Z$, $i=1,\ldots,p-1$,
$j=0,\ldots,\halfp$,
with the defining relations
\begin{equation}\label{eq2.1}
	\begin{aligned}
		&[L(m),L(n)]=(m-n)L(m+n)+\frac{1}{12}(m^{3}-m)\delta_{m+n,0}\ov{C}_{0};\\
		&[L(m),I^{i}(n)]=-(n+\frac{i}{p})I^{i}(m+n);\\
		&[I^{i}(m),I^{j}(n)]=(m+\frac{i}{p})\delta_{i+j,p}\delta_{m+n+1,0}\ov{C}_{i},
	\end{aligned}
\end{equation}
where $m,n\in\Z$, $i,j=1,\ldots,p-1$.
\end{lem}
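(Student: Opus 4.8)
The plan is to verify directly that the substitution $L(m)=-\frac1p L_{pm}$, $I^i(m)=-\frac1p L_{pm+i}$, $\ov C_i=C_i/p$, $\ov C_0=C_0/p^2$ transforms the relations \eqref{eq1.1} into the relations \eqref{eq2.1}. First I would observe that since $\{L_m,C_i\}$ is a basis of $\L$ and every integer is uniquely written as $pm$ or $pm+i$ with $m\in\Z$ and $1\le i\le p-1$, the proposed elements $L(m),I^i(m),\ov C_j$ are again a basis (the change of basis is invertible, being diagonal with nonzero scalar entries). So only the bracket relations need checking.

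For the first relation, compute $[L(m),L(n)]=\frac1{p^2}[L_{pm},L_{pn}]=\frac1{p^2}\bigl((pn-pm)L_{pm+pn}+\delta_{pm+pn,0}\frac1{12}((m)^3-m)C_0\bigr)$, using that $(pm/p)^3-(pm/p)=m^3-m$; rewriting $L_{p(m+n)}=-pL(m+n)$ and $C_0=p^2\ov C_0$ gives $(m-n)L(m+n)+\frac1{12}(m^3-m)\delta_{m+n,0}\ov C_0$, as claimed. For the second, $[L(m),I^i(n)]=\frac1{p^2}[L_{pm},L_{pn+i}]=\frac1{p^2}(pn+i)L_{pm+pn+i}=\frac{pn+i}{p^2}\cdot(-p)I^i(m+n)=-(n+\frac ip)I^i(m+n)$, using $[L_a,L_r]=rL_{a+r}$ with $a=pm\in p\Z$, $r=pn+i\notin p\Z$. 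For the third, $[I^i(m),I^j(n)]=\frac1{p^2}[L_{pm+i},L_{pn+j}]=\frac1{p^2}(pm+i)\delta_{pm+i+pn+j,0}C_{\widetilde r}$ with $r=pm+i$; here $\delta_{(pm+i)+(pn+j),0}$ forces $i+j\equiv 0\pmod p$, i.e.\ $i+j=p$ (since $1\le i,j\le p-1$), and then $pm+i+pn+j=p(m+n+1)$, giving $\delta_{m+n+1,0}$; moreover $\widetilde r=\min\{\ov r,p-\ov r\}=\min\{i,p-i\}$, and with the convention $\ov C_i=\ov C_{p-i}$ we may write $C_{\widetilde r}=p\,\ov C_i$, yielding $(m+\frac ip)\delta_{i+j,p}\delta_{m+n+1,0}\ov C_i$. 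Finally $[\ov C_j,\L]=0$ is immediate from $[C_i,\L]=0$.

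The only mildly delicate point — hardly an obstacle — is the bookkeeping in the last relation: matching $\widetilde r=\min\{\ov r,p-\ov r\}$ against the index $i$ and confirming that the case $i+j\not\equiv 0\pmod p$ really does give zero (so that no $\delta_{i+j,p}$ term is lost and no spurious term is gained), together with keeping track of the two different normalizations $\ov C_0=C_0/p^2$ versus $\ov C_i=C_i/p$. Once these are checked the lemma follows, so the proof is indeed "straightforward to check" as asserted.
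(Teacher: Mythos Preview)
Your proposal is correct and is exactly the direct verification that the paper intends when it says ``it is straightforward to check''; the paper offers no further argument. Your bookkeeping in all three bracket computations, including the handling of $\widetilde r$ via the convention $C_i=C_{p-i}$ and the two different normalizations of the central elements, is accurate.
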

We shall use notations in Lemma \ref{lem2.1} in this section.
Form generating functions
$$L(x)=\sum_{n\in\Z}L(n)x^{-n-2},\ \ \
I^{i}_{p}(x)=\sum_{n\in\Z}I^{i}(n)x^{-(n+\frac{i}{p})-1},\ \ \ \
i=1,\ldots,p-1.$$
Then the defining relations (\ref{eq2.1}) are equivalent to
\begin{eqnarray}\label{eq:formLie}
\begin{aligned}
	&[L(x_{1}), L(x_{2})]      =\displaystyle\frac{d}{dx_{2}}\Big(L(x_{2})\Big)x_{1}^{-1}
\delta\left(\frac{x_{2}}{x_{1}}\right)
	+2L(x_{2})\frac{\partial}{\partial x_{2}}x_{1}^{-1}\delta\left(\frac{x_{2}}{x_{1}}\right)
	\\
	&\hspace{3cm} +\frac{1}{12}\left(\frac{\partial}{\partial x_{2}}\right)^{3}x_{1}^{-1}\delta\left(\frac{x_{2}}{x_{1}}\right)\ov{C}_{0},                                                                    \\
	&[L(x_{1}), I^{i}_{p}(x_{2})] =\displaystyle\frac{d}{dx_{2}}\Big(I^{i}_{p}(x_{2})\Big)
x_{1}^{-1}\delta\left(\frac{x_{2}}{x_{1}}\right)
	+ I^{i}_{p}(x_{2})\frac{\partial}{\partial x_{2}}x_{1}^{-1}\delta\left(\frac{x_{2}}{x_{1}}\right),    \\
	&[I^{i}_{p}(x_{1}), I^{j}_{p}(x_{2})]
	=\delta_{i+j,p}\frac{\partial}{\partial x_{2}}\left(x_{1}^{-1}\delta\left(\frac{x_{2}}{x_{1}}\right)
\left(\frac{x_{2}}{x_{1}}\right)^{\frac{i}{p}}\right)\ov{C}_{i},
	\end{aligned}
\end{eqnarray}
for $i,j=1,\ldots,p-1.$

We now construct some $\L$-modules.
Let
	\begin{eqnarray}
(\L)_{\geq 0}=\sum_{m\geq 0}\C L(m)\oplus
\sum_{i=1}^{p-1}\sum_{n\geq 0}\C I^{i}(n)\oplus \sum_{i=0}^{\halfp}\C \ov{C}_{i},
\end{eqnarray}
which is a subalgebra of $\L.$

Equip $\C$ with a $(\L)_{\geq 0}$-module structure by letting
$L(m)$, $I^{i}(n)$ act trivially for $m\geq 1$, $n\geq 0$,
and $L(0), \ov{C}_{i}$ act as scalars $h, \ell_{i}$ for $i=0,\ldots,\halfp$, respectively.
Denote $\el=(\ell_{0},\ldots,\ell_{\halfp})\in\C^{\halfp+1}$.
Form the induced $\L$-module
\begin{eqnarray}
M_{\L}(\el, h)=\U(\L)\otimes_{\U((\L)_{\geq 0})}\C.
\end{eqnarray}
Set
${\bf 1}_{h}=1\otimes 1\subset M_{\L}(\el, h).$
Then $M_{\L}(\el, h)$ is $\C$-graded by $L(0)$-eigenvalues
	$$M_{\L}(\el, h)
=\bigoplus_{n\geq 0}M_{\L}(\el, h)_{(n+h)},$$
	where $M_{\L}(\el, h)_{(h)}
=\C {\bf1}_{h}$
and $M_{\L}(\el, h)_{(n+h)}$ is the $L(0)$-eigenspace of eigenvalue $n+h$ for $n> 0$.
More precisely, $M_{\L}(\el, h)_{(n+h)}$ has a basis consisting of
$$I^{i_{1}}(-k_{1})\cdots I^{i_{s}}(-k_{s})L(-m_{1})\cdots
      L(-m_{r}){\bf{1}}_{h}, $$
where
	$r, s\geq 0$, $i_{1},\ldots,i_{s}\in\{1,\ldots,p-1\}$,
	$m_{1}\geq\cdots\geq m_{r}\geq 1$, $k_{1}\geq\cdots\geq k_{s}\geq 1$ with $\sum\limits_{i=1}^{r}m_{i}+\sum\limits_{j=1}^{s}(k_{j}-\displaystyle\frac{i_{j}}{p})=n$.

In general, $M_{\L}(\el, h)$ is reducible as a $\L$-module.
Denote by $T_{\L}(\el, h)$ the unique maximal proper
$\L$-submodule of $M_{\L}(\el, h)$, and set
	\begin{eqnarray}
L_{\L}(\el, h)
=M_{\L}(\el, h)/T_{\L}(\el, h).
\end{eqnarray}
	Then $L_{\L}(\el, h)$ is an irreducible $\L$-module.

	\begin{definition}
\label{reslev}
		{\em A $\L$-module $W$ is called {\em restricted} if for any $w\in W$,
			$L(n)w=I^{i}(n)w=0$ for sufficiently large $n$ and $i=1,\ldots,p-1$.
			We say an $\L$-module $W$ is of {\em level} $\el$
if the central elements $\ov{C}_{i}$ act as
			scalars $\ell_{i}$ for $i=0,\ldots,\halfp.$}
	\end{definition}

	It is easy to see that $M_{\L}(\el, h)$,
$L_{\L}(\el, h)$ are restricted $\L$-modules
of level $\el$ for any $h\in\C$.

In the following, we will show that restricted $\L$-modules
are equivalent to the vertex algebra constructed from
the Lie algebra $\mathcal{N}_{p}$ and their twisted modules.

Consider the formal series
$$T(x)=\sum_{n\in\Z}T_{n}x^{-n-2},\ \ \
N^{i}(x)=\sum_{n\in\Z}N^{i}_{n}x^{-n-1} \ \; \mbox{for}\ \; i=1,\ldots,p-1.
$$
We have
\begin{eqnarray}\label{eq:formN}
\begin{aligned}
&[T(x_{1}), T(x_{2})]
=\displaystyle\frac{d}{dx_{2}}\Big(T(x_{2})\Big)x_{1}^{-1}
\delta\left(\frac{x_{2}}{x_{1}}\right)
	+2T(x_{2})\frac{\partial}{\partial x_{2}}x_{1}^{-1}\delta\left(\frac{x_{2}}{x_{1}}\right)
	\\
	&\hspace{3cm} +\frac{1}{12}\left(\frac{\partial}{\partial x_{2}}\right)^{3}x_{1}^{-1}\delta\left(\frac{x_{2}}{x_{1}}\right)
K_{0},                                                                    \\
  &[T(x_{1}), N^{i}(x_{2})]=\displaystyle\frac{d}{dx_{2}}\Big(N^{i}(x_{2})\Big)
      x_{1}^{-1}\delta\left(\frac{x_{2}}{x_{1}}\right)
	+ N^{i}(x_{2})\frac{\partial}{\partial x_{2}}x_{1}^{-1}\delta\left(\frac{x_{2}}{x_{1}}\right),
               \\
	&[N^{i}(x_{1}), N^{j}(x_{2})]=\delta_{i+j,p} \frac{\partial}{\partial x_{2}}\left(x_{1}^{-1}\delta\left(\frac{x_{2}}{x_{1}}\right)
        \right)K_{i}
\end{aligned}
\end{eqnarray}
for $i,j=1,\ldots,p-1$. Let
\begin{align*}
    &(\mathcal{N}_{p})_{(\leq 1)}=\coprod_{n\leq 1}\C T_{-n}\oplus
       \sum_{i=1}^{p-1}\coprod_{n\leq 0}\C N^{i}_{-n}
       \oplus\sum_{i=0}^{\halfp}\C K_{i},\\
    &(\mathcal{N}_{p})_{(\geq 2)}=\coprod_{n\geq 2}\C T_{-n}\oplus
          \sum_{i=1}^{p-1}\coprod_{n\geq 1}\C N^{i}_{-n}.
\end{align*}
They are graded subalgebras of $\mathcal{N}_{p}$,
and $\mathcal{N}_{p}=(\mathcal{N}_{p})_{(\leq 1)}\oplus (\mathcal{N}_{p})_{(\geq 2)}.$

Let
$\el=(\ell_{0},\ldots,\ell_{\halfp})\in\C^{\halfp+1}$.
Consider $\C$ as an $(\mathcal{N}_{p})_{(\leq 1)}$-module with $K_{i}$ acting
	as the scalar $\ell_{i}$ for $i=0,\ldots,\halfp,$ and
$\coprod_{n\leq 1}\C T_{-n}\oplus
\sum_{i=1}^{p-1}\coprod_{n\leq 0}\C N^{i}_{-n}$
	acting trivially.
	Form the induced module
 \begin{eqnarray}
	V_{\mathcal{N}_{p}}(\el,0)=\U(\mathcal{N}_{p})
	  \otimes_{\U((\mathcal{N}_{p})_{(\leq 1)})}\C.
\end{eqnarray}
Set ${\bf 1} =1\otimes 1\in V_{\mathcal{N}_{p}}(\el,0)$.
It is straightforward to show that
	$V_{\mathcal{N}_{p}}(\el,0)$ is a vertex operator algebra
with vacuum vector ${\bf 1}$
	and conformal vector $\omega=T_{-2}{\bf 1}$ (cf. Theorem 5.7.4 of \cite{LL}, etc.).
	And
$\{\omega=T_{-2}{\bf 1},N^{i}:=N^{i}_{-1}{\bf 1}\;\mbox{for}\; i=1,\ldots,p-1\}$
is a generating subset of
	$V_{\mathcal{N}_{p}}(\el,0)$.
Furthermore, $V_{\mathcal{N}_{p}}(\el,0)$ is
$\N$-graded by $T_{0}$-eigenvalues:
\begin{eqnarray*}
		V_{\mathcal{N}_{p}}(\el,0)=\coprod_{n\geq 0}V_{\mathcal{N}_{p}}(\el,0)_{(n)},
	\end{eqnarray*}
	where $V_{\mathcal{N}_{p}}(\el,0)_{(0)}=\C{\bf 1}$,
and for each $n\geq 1$, $V_{\mathcal{N}_{p}}(\el,0)_{(n)}$ has a basis
	$$N^{i_{1}}_{-k_{1}}\cdots N^{i_{s}}_{-k_{s}}T_{-m_{1}}\cdots
	   T_{-m_{r}}{\bf{1}}, $$
where
	$r, s\geq 0$, $i_{1},\ldots,i_{s}\in\{1,\ldots,p-1\}$,
	$m_{1}\geq\cdots\geq m_{r}\geq 2$, $k_{1}\geq\cdots\geq k_{s}\geq 1$
 with $\sum\limits_{i=1}^{r}m_{i}+\sum\limits_{j=1}^{s}k_{j}=n.$

\begin{rmk}{\em
For the Lie algebra $\mathcal{N}_{p}$,
we can show that restricted $\mathcal{N}_{p}$-modules of level
$\el$ (defined in the same way as Definition \ref{reslev})
are in one-to-one correspondence to modules for the vertex algebra
$V_{\mathcal{N}_{p}}(\el,0)$
(cf. Theorem 2.8 and Theorem 2.9 of \cite{GW} for the case $p=2$).
}
\end{rmk}

\begin{rmk}
		\label{Nuniversal}
		{\em	As a module for $\mathcal{N}_{p}$,
$V_{\mathcal{N}_{p}}(\el,0)$ is generated
by ${\bf 1}$ with the relations
			\begin{equation*}
			T_{m}{\bf 1}=
N^{i}_{n}{\bf 1}=0,\;\;\; K_{j}=\ell_{j}
			\end{equation*}
for $m\geq -1$,
$n\geq 0$, $i=1,\ldots,p-1$,
$j=0,\ldots,\halfp.$
			$V_{\mathcal{N}_{p}}(\el,0)$ is {\em universal}
			in the sense that for any $\mathcal{N}_{p}$-module $W$ of level
$\el$ equipped with a vector $v$ such that
			$
			T_{m}v= N^{i}_{n}v=0\; \mbox{for}\; m\geq -1, n\geq 0$,
$i=1,\ldots,p-1$,
			there exists a unique $\mathcal{N}_{p}$-module
 map $V_{\mathcal{N}_{p}}(\el,0)\longrightarrow W$ sending ${\bf 1}$ to $v$.}
	\end{rmk}

Let $\xi_{p}$ be a primitive $p$-th root of unity.
Let $\sigma_{p}:\mathcal{N}_{p}\longrightarrow \mathcal{N}_{p}$
be a linear map defined
 by
\begin{eqnarray}
\sigma_{p}(T_{m})=T_{m}, \;\sigma_{p}(N^{i}_{m})=\xi_{p}N^{i}_{m},\;
\sigma_{p}(K_{0})=K_{0}, \;\sigma_{p}(K_{i})=\xi_{p}^{2}K_{i},
\end{eqnarray}
for $m\in\Z$, $i=1,\ldots,p-1$.
Then $\sigma_{p}$ is an automorphism of the Lie algebra $\mathcal{N}_{p}$
of order $p$.
It lifts up naturally to an automorphism of the
vertex (operator) algebra $V_{\mathcal{N}_{p}}(\el,0)$
for $p=2$ with $\el$ being arbitrary, and for $p\geq 3$ with
$\el=(\ell_{0},0,\ldots,0)$.

For the case $p=2$ with $\el$ being arbitrary,
 it is proved in \cite{G1} (Theorems 4.4 and 4.5) that
restricted $\mathfrak{g}_{2}$-modules are equivalent to $\sigma_{2}$-twisted
$V_{\mathcal{N}_{2}}(\el,0)$-modules.
For the notions and related results of twisted modules
for vertex (operator) algebras, cf. \cite{L1}, etc.

In the rest of the paper,
we always assume $\el=(\ell_{0},0,\ldots,0)\in\C^{\halfp+1}$.
\begin{thm}	\label{1}
		If $W$ is a restricted $\L$-module of level $\el$,
then $W$ is a $\sigma_{p}$-twisted
		$V_{\mathcal{N}_{p}}(\el,0)$-module for
$V_{\mathcal{N}_{p}}(\el,0)$ as a vertex algebra with
\begin{align*}
	&Y_{\sigma_{p}}(T_{-2}{\bf 1}, x)=L(x)=\sum\limits_{n\in\Z}L(n)x^{-n-2},\\
	&Y_{\sigma_{p}}(N^{i}_{-1}{\bf 1}, x)=I^{i}_{p}(x)
	=\sum\limits_{n\in\Z}I^{i}(n)x^{-n-\frac{i}{p}-1},\;\;i=1,\ldots,p-1.
\end{align*}
\end{thm}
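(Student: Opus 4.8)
The plan is to construct the twisted vertex operator map directly from the available local generating functions and then verify the axioms of a $\sigma_p$-twisted module using a standard ``local systems of twisted vertex operators'' argument. First I would observe that on a restricted $\L$-module $W$, all the series $L(x)$ and $I^i_p(x)$ are \emph{lower-truncated} (acting on any fixed $w$ they have only finitely many negative-power terms), so they lie in the space $\mathcal{E}(W)$ of formal fields on $W$; moreover $I^i_p(x)$ has exponents in $\frac{i}{p}+\Z$, exactly matching the $\sigma_p$-eigenvalue $\xi_p^i$, and $L(x)$ has integral exponents matching $\sigma_p$-fixed vectors. The key input is Lemma \ref{lem2.1} together with \eqref{eq:formLie}: these commutator formulas say precisely that $\{L(x), I^1_p(x),\ldots,I^{p-1}_p(x)\}$ is a $\sigma_p$-local subset of $\mathcal{E}(W)$, i.e. any two of them are mutually local in the twisted sense (the $\delta$-function identities with the fractional power $(x_2/x_1)^{i/p}$ are exactly the twisted Jacobi-type relations).

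Next I would invoke the general machinery (as in \cite{L1}, \cite{LL}) that a $\sigma_p$-local set of fields on $W$ generates, under the normally-ordered-product / iterate operation, a $\sigma_p$-twisted module structure over the vertex algebra it generates inside $\mathcal{E}(W)$; equivalently, there is a vertex algebra $V$ and a $\sigma_p$-twisted $V$-module structure on $W$ with the chosen fields as the images of the generators. Concretely, the components $L(n)$, $I^i(n)$ and the scalars $\ell_j$ satisfy on $W$ exactly the relations \eqref{eq2.1}, which are formally identical to the relations \eqref{eq1.2} defining $\mathcal{N}_p$ after the substitution $T_n\mapsto L(n)$, $N^i_n\mapsto$ (the $\frac{i}{p}$-shifted modes of $I^i_p(x)$); so the Lie-algebra-of-fields generated is a homomorphic image of $\mathcal{N}_p$, and by the universality property in Remark \ref{Nuniversal} one gets a canonical $\mathcal{N}_p$-module, hence $V_{\mathcal{N}_p}(\el,0)$-related, structure. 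The point is that the vertex algebra generated by these twisted fields is a quotient of (a twisted-mode version of) $V_{\mathcal{N}_p}(\el,0)$, and the assignment $Y_{\sigma_p}(T_{-2}{\bf 1},x)=L(x)$, $Y_{\sigma_p}(N^i_{-1}{\bf 1},x)=I^i_p(x)$ extends uniquely to all of $V_{\mathcal{N}_p}(\el,0)$ by the reconstruction theorem.

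Then I would check the twisted-module axioms explicitly on generators: the $\sigma_p$-equivariance $Y_{\sigma_p}(\sigma_p v, x)=\lim_{x^{1/p}\to\xi_p^{-1}x^{1/p}}Y_{\sigma_p}(v,x)$ holds because conjugating $x$ by $\xi_p^{-1}$ on $x^{-n-i/p-1}$ multiplies by $\xi_p^{\,i}$ (using $p\nmid$ the relevant integer), matching $\sigma_p(N^i_{-1}{\bf 1})=\xi_p N^i_{-1}{\bf 1}$, and acts trivially on $L(x)$; the vacuum axiom $Y_{\sigma_p}({\bf 1},x)=\mathrm{id}$ is automatic; the lower-truncation axiom is the restrictedness of $W$; the $L(-1)$-derivative property $Y_{\sigma_p}(T_{-1}v,x)=\frac{d}{dx}Y_{\sigma_p}(v,x)$ follows from $[L(-1),-]$ acting as $\frac{d}{dx}$ on the fields, which one reads off from \eqref{eq2.1}; and the twisted Jacobi identity on the generating fields is exactly \eqref{eq:formLie}, after which it propagates to all of $V_{\mathcal{N}_p}(\el,0)$ by the standard argument that the Jacobi identity is preserved under forming iterates of mutually local fields.

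I expect the main obstacle to be bookkeeping with the fractional exponents: one must be careful that the modes of $I^i_p(x)$ indexed by $n+\frac{i}{p}$ correspond correctly to the twisted modes $(N^i_{-1}{\bf 1})_{n+i/p-1}$ dictated by the general definition of $Y_{\sigma_p}$, including getting the shift in $\omega=T_{-2}{\bf 1}$ versus $N^i=N^i_{-1}{\bf 1}$ (conformal weight $2$ versus $1$) and the half-integer-type corrections in the twisted Jacobi identity right; in particular the third relation in \eqref{eq:formLie}, with its $(x_2/x_1)^{i/p}$ factor and the shift $\delta_{m+n+1,0}$, has to be matched against the twisted commutator formula coming from the second relation in \eqref{eq1.2} for $\mathcal{N}_p$ under $\sigma_p$. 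Once this dictionary is fixed, everything reduces to the cited twisted-module reconstruction theorem (e.g. \cite{L1}, \cite{LL}) applied to the $\sigma_p$-local generating set $\{L(x), I^i_p(x)\}$.
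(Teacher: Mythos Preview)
Your proposal is correct and follows essentially the same strategy as the paper: both use Li's local-systems-of-twisted-vertex-operators machinery from \cite{L1}, taking \eqref{eq:formLie} as the input establishing mutual locality of the set $\{L(x), I^i_p(x)\}$, obtaining a vertex algebra $\langle U_W\rangle$ of fields with $W$ as a faithful $\sigma_p$-twisted module, and then using the universal property of $V_{\mathcal{N}_p}(\el,0)$ (Remark~\ref{Nuniversal}) to produce the vertex algebra homomorphism $V_{\mathcal{N}_p}(\el,0)\to\langle U_W\rangle$. The only cosmetic difference is that where you propose to re-verify the twisted-module axioms (equivariance, vacuum, truncation, $L(-1)$-derivative, Jacobi) by hand on generators, the paper simply cites the relevant black boxes from \cite{L1} (Lemma~2.2, Corollary~3.15, Lemma~2.11) and then checks that the modes of $Y(L(x),x_1)$, $Y(I^i_p(x),x_1)$ on $\langle U_W\rangle$ satisfy \eqref{eq:formN}, which is exactly the bookkeeping step you flag as the main obstacle.
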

	\begin{proof}
Let $U_{W}=\{L(x), I^{i}_{p}(x), {\bf 1}_{W}\ | \ i=1,\ldots,p-1\}$,
where ${\bf 1}_{W}$ is the identity operator on $W$.
It follows from (\ref{eq:formLie})
and Lemma 2.2 of \cite{L1}
that
$L(x), I^{i}_{p}(x)$, $i=1,\ldots,p-1$,
are mutually local $\Z_{p}$-twisted vertex operators on $W$.
Hence, by Corollary 3.15 of \cite{L1},
$\langle U_{W}\rangle$ is a vertex algebra with $W$ a faithful $\sigma_{p}$-twisted module.
Using Lemma 2.11 of \cite{L1}, we see that
$Y(L(x),x_{1})$ and $Y(I^{i}_{p}(x),x_{1})$ satisfy the
relations (\ref{eq:formN})
of the Lie algebra $\mathcal{N}_{p}$.
So $\langle U_{W}\rangle$ is a $\mathcal{N}_{p}$-module
with $T_{n}, N^{i}_{n}$ acting as $L(x)_{n+1}$, $I^{i}_{p}(x)_{n}$ for
$n\in\Z$,
$i=1,\ldots,p-1$, and $K_{j}$ acting as $\ell_{j}$ for $j=0,\ldots,\halfp.$

By the universal property (Remark \ref{Nuniversal}) of
$V_{\mathcal{N}_{p}}(\el,0)$,
there exists a unique $\mathcal{N}_{p}$-module
homomorphism
		$$\psi: V_{\mathcal{N}_{p}}(\el,0)\longrightarrow
\langle U_{W}\rangle;\;\;{\bf 1}\mapsto {\bf 1}_{W}.$$
		Then we have
		$$\begin{aligned}
			\psi(\omega_{n}v)&=L(x)_{n}\psi(v)=\psi(\omega)_{n}\psi(v),\\
			\psi(N^{i}_{n}v)&=I^{i}_{p}(x)_{n}\psi(v)=\psi(N^{i})_{n}\psi(v)
		\end{aligned}$$
		for all $v\in V_{\mathcal{N}_{p}}(\underline{\ell},0)$,
$i=1,\ldots,p-1$, $n\in\Z.$
		Hence $\psi$ is a vertex algebra homomorphism.
		Therefore, $W$ is a $\sigma_{p}$-twisted
		$V_{\mathcal{N}_{p}}(\el,0)$-module with
		$Y_{\sigma_{p}}(T_{-2}{\bf 1}, x)=L(x)$,
		$Y_{\sigma_{p}}(N^{i}_{-1}{\bf 1}, x)=I^{i}_{p}(x)$,
$i=1,\ldots,p-1$.
	\end{proof}

Conversely, we have
	\begin{thm}	\label{2}
		Let $W$ be a $\sigma_{p}$-twisted
$V_{\mathcal{N}_{p}}(\el,0)$-module. Then $W$ is a restricted $\L$-module
 of level $\el$ with $L(x)=Y_{W}(T_{-2}{\bf 1},x)$,
$I^{i}_{p}(x)=Y_{W}(N^{i}_{-1}{\bf 1},x).$
	\end{thm}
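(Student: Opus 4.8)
The plan is to run the construction of Theorem~\ref{1} in reverse. Let $W$ be a $\sigma_{p}$-twisted $V_{\mathcal{N}_{p}}(\el,0)$-module with twisted vertex operator map $Y_{W}$. Since $\omega=T_{-2}{\bf 1}$ is fixed by $\sigma_{p}$, the twisted-module axioms force $Y_{W}(T_{-2}{\bf 1},x)\in\mathrm{End}(W)[[x,x^{-1}]]$, so I would set $L(x):=Y_{W}(T_{-2}{\bf 1},x)=\sum_{n\in\Z}L(n)x^{-n-2}$ with $L(n)\in\mathrm{End}(W)$. Likewise $N^{i}_{-1}{\bf 1}$ is an eigenvector of $\sigma_{p}$ whose eigenvalue makes $Y_{W}(N^{i}_{-1}{\bf 1},x)$ a twisted vertex operator with exponents in $-\frac{i}{p}-1+\Z$, so I would set $I^{i}_{p}(x):=Y_{W}(N^{i}_{-1}{\bf 1},x)=\sum_{n\in\Z}I^{i}(n)x^{-n-\frac{i}{p}-1}$ with $I^{i}(n)\in\mathrm{End}(W)$. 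It then remains to check that $L(n),I^{i}(n)$ together with the scalars $\ell_{0},\dots,\ell_{\halfp}$ satisfy the defining relations (\ref{eq2.1}) of $\L$, and that $W$ is restricted of level $\el$.

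For the commutation relations I would invoke the twisted commutator (Jacobi) identity for the twisted module $W$ (cf. \cite{L1}): for $a,b\in V_{\mathcal{N}_{p}}(\el,0)$ the bracket $[Y_{W}(a,x_{1}),Y_{W}(b,x_{2})]$ is determined by the finitely many products $a_{j}b$ with $j\ge0$, together with the $\sigma_{p}$-eigenvalue of $a$. Taking $a,b$ among the generators $\omega=T_{-2}{\bf 1}$ and $N^{i}=N^{i}_{-1}{\bf 1}$, one computes these products in $V_{\mathcal{N}_{p}}(\el,0)$ using only the relations $T_{m}{\bf 1}=N^{i}_{n}{\bf 1}=0$ ($m\ge-1$, $n\ge0$) and $K_{0}=\ell_{0}$, $K_{1}=\dots=K_{\halfp}=0$; they are exactly the data encoding the generating-function relations (\ref{eq:formN}). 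Substituting into the twisted commutator formula produces precisely the relations (\ref{eq:formLie}): passing to the $\sigma_{p}$-twisted fields replaces the integer exponents in the $N^{i}$-modes by exponents in $-\frac{i}{p}+\Z$ and inserts the correction $(x_{2}/x_{1})^{i/p}$ into the $[I^{i}_{p},I^{j}_{p}]$-bracket, which is exactly the difference between (\ref{eq:formN}) and (\ref{eq:formLie}). Extracting coefficients, the operators $L(n),I^{i}(n)$ and the scalars $\ell_{j}$ satisfy (\ref{eq2.1}), so $W$ becomes a $\L$-module on which $\ov{C}_{j}$ acts by $\ell_{j}$, i.e. of level $\el$. (Equivalently, one could first assemble, as in the proof of Theorem~\ref{1}, the vertex algebra $\langle U_{W}\rangle$ generated by $L(x),I^{i}_{p}(x),{\bf 1}_{W}$ and invoke the universal property of Remark~\ref{Nuniversal}.)

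Restrictedness is then immediate from the lower-truncation axiom: for each $w\in W$, the series $Y_{W}(T_{-2}{\bf 1},x)w$ and $Y_{W}(N^{i}_{-1}{\bf 1},x)w$ involve only finitely many negative powers of $x$, which is exactly the statement that $L(n)w=I^{i}(n)w=0$ for all sufficiently large $n$ and all $i=1,\dots,p-1$. Hence $W$ is a restricted $\L$-module of level $\el$, with $L(x)=Y_{W}(T_{-2}{\bf 1},x)$ and $I^{i}_{p}(x)=Y_{W}(N^{i}_{-1}{\bf 1},x)$ by construction.

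The main obstacle is the middle step: verifying that, under the twisting by $\sigma_{p}$, the integer-moded relations (\ref{eq:formN}) of $\mathcal{N}_{p}$ pass over exactly to the $\frac1p\Z$-moded relations (\ref{eq:formLie}) of $\L$ --- in particular that the delta-function manipulations yield the correct shift $n+\frac{i}{p}$ in $[L(m),I^{i}(n)]$ and the correct $(x_{2}/x_{1})^{i/p}$ correction in $[I^{i}_{p},I^{j}_{p}]$ once the $\sigma_{p}$-eigenvalues of $\omega$ and of the $N^{i}$ are inserted. This is a careful but routine application of the twisted Jacobi identity, and is the $p\ge3$ analogue of the computation carried out for $p=2$ in \cite{G1}.
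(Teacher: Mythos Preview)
Your proposal is correct and follows essentially the same route as the paper: compute the products $a_{j}b$ for $a,b\in\{T_{-2}{\bf 1},\,N^{i}_{-1}{\bf 1}\}$, feed them into the twisted commutator formula (equation~(2.40) of \cite{L1}), and compare with (\ref{eq:formLie}), with restrictedness coming from lower truncation. One small simplification you are missing: since $\el=(\ell_{0},0,\dots,0)$ forces $(N^{i}_{-1}{\bf 1})_{j}N^{k}_{-1}{\bf 1}=0$ for all $j\ge0$, the bracket $[I^{i}_{p}(x_{1}),I^{j}_{p}(x_{2})]$ vanishes outright on $W$, so the ``$(x_{2}/x_{1})^{i/p}$ correction'' you flag as the main obstacle never actually needs to be produced---both sides of that relation are identically zero and there is nothing delicate to check there.
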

	\begin{proof}
Let $j\geq 0$.
		Then
for $i,k=1,\ldots,p-1$,
\begin{eqnarray*}
\begin{aligned}
		& (T_{-2}{\bf 1})_{j}T_{-2}{\bf 1}= (j+1)T_{j-3}
		{\bf 1}+\delta_{j-3,0}\frac{(j-1)^{3}-(j-1)}{12}\ell_{0}{\bf 1},  \\
		& (T_{-2}{\bf 1})_{j}N^{i}_{-1}{\bf 1}= N^{i}_{j-2}{\bf 1},\ \
(N^{i}_{-1}{\bf 1})_{j}N^{k}_{-1}{\bf 1}
= j\delta_{i+k,p}\delta_{j-1,0}\ell_{i}{\bf 1}=0
\end{aligned}
 \end{eqnarray*}
(note that $\el=(\ell_{0},0,\ldots,0)$).
		We get (cf. equation (2.40) of \cite{L1})
\begin{align*}
		&[Y_{W}(T_{-2}{\bf 1}, x_{1}), Y_{W}(T_{-2}{\bf 1}, x_{2})]  \\
		&\  = Y_{W}(T_{-3}{\bf 1}, x_{2})x_{2}^{-1}\delta\left(\frac{x_{1}}{x_{2}}\right)
+2Y_{W}(T_{-2}{\bf 1}, x_{2})\left(\frac{\partial}{\partial x_{1}}\right)x_{2}^{-1}\delta\left(\frac{x_{1}}{x_{2}}\right)  \\
		&\ \ \ \ \ \ +\frac{1}{12}\left(\frac{\partial}{\partial x_{1}}\right)^{3}x_{2}^{-1}\delta\left(\frac{x_{1}}{x_{2}}\right)\ell_{0}{\bf 1},\\
		&[Y_{W}(T_{-2}{\bf 1}, x_{1}), Y_{W}(N^{i}_{-1}{\bf 1}, x_{2})]\\
		&\ \ \ =Y_{W}(N^{i}_{-2}{\bf 1},x_{2})
	  x_{1}^{-1}\delta\left(\frac{x_{2}}{x_{1}}\right)
+Y_{W}(N^{i}_{-1}{\bf 1}, x_{2})\left(\frac{\partial}{\partial x_{2}}\right)x_{1}^{-1}\delta\left(\frac{x_{2}}{x_{1}}\right),\\
		&[Y_{W}(N^{i}_{-1}{\bf 1}, x_{1}), Y_{W}(N^{k}_{-1}{\bf 1}, x_{2})]=0.
\end{align*}
Comparing with (\ref{eq:formLie}),
we see that $W$ is a $\L$-module of level $\el$ with
$L(x)=Y_{W}(T_{-2}{\bf 1},x)$,
$I^{i}_{p}(x)=Y_{W}(N^{i}_{-1}{\bf 1},x)$,
$i=1,\ldots,p-1$.
		That $W$ is restricted is clear.
	\end{proof}

Set
	\begin{equation*}
	\L^{(0)}=\C L(0)\oplus\C \sum_{i=0}^{\halfp}\ov{C}_{i}, \;\;\;\;
	\L^{(n)}=\C L(n)\;\;\mbox{for}\;0\neq n\in\Z,
	\end{equation*}
	\begin{equation*}
	\L^{(\frac{i}{p}+n)}=\C I^{i}(n),\;, i=1,\ldots,p-1.
	\end{equation*}
	Then $\L=\bigoplus\limits_{n\in\Z}\L^{(\frac{n}{p})}$ is a
$\displaystyle\frac{1}{p}\Z$-graded Lie algebra
with grading given by $L(0)$-eigenvalues.

By Theorem \ref{1} and Theorem \ref{2} we have the following result.
	\begin{thm}
		\label{voamod}
		The $\sigma_{p}$-twisted modules for $V_{\mathcal{N}_{p}}(\el,0)$
viewed as a vertex operator algebra
		(i.e. $\C$-graded by $L_{0}$-eigenvalues
and with the two grading restrictions)
are exactly those restricted modules
		for the Lie algebra $\L$ of
level $\el$ that are $\C$-graded by $L_{0}$-eigenvalues
and with the two grading restrictions.
		Furthermore, for any $\sigma_{p}$-twisted
$V_{\mathcal{N}_{p}}(\el,0)$-module $W$,
the $\sigma_{p}$-twisted
$V_{\mathcal{N}_{p}}(\el,0)$-submodules of $W$ are exactly
		the submodules of $W$ for $\L$,
and these submodules are in particular graded.
	\end{thm}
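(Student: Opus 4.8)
The plan is to deduce this result from Theorems \ref{1} and \ref{2} by keeping track of the $L_{0}$-grading, and to handle the submodule statement by using that $V_{\mathcal{N}_{p}}(\el,0)$ is generated, as a vertex algebra, by $\omega=T_{-2}{\bf 1}$ and the vectors $N^{i}=N^{i}_{-1}{\bf 1}$. First I would invoke Theorems \ref{1} and \ref{2}: together they already set up a bijection between restricted $\L$-modules of level $\el$ and $\sigma_{p}$-twisted modules over $V_{\mathcal{N}_{p}}(\el,0)$ \emph{viewed merely as a vertex algebra}, under which $Y_{\sigma_{p}}(T_{-2}{\bf 1},x)=L(x)$ and $Y_{\sigma_{p}}(N^{i}_{-1}{\bf 1},x)=I^{i}_{p}(x)$ for $i=1,\ldots,p-1$. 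Thus all that remains for the first assertion is to identify, on the Lie-algebra side, the extra data that promotes a twisted module over the vertex algebra $V_{\mathcal{N}_{p}}(\el,0)$ to a twisted module over the vertex operator algebra $V_{\mathcal{N}_{p}}(\el,0)$, namely the $\C$-grading by $L_{0}$-eigenvalues together with the two grading restrictions.

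Next I would match the gradings. Under the correspondence, $L_{0}=(T_{-2}{\bf 1})_{1}$ acts on a twisted module $W$ as $\mathrm{Res}_{x}\,x\,L(x)=L(0)$, so the $L_{0}$-eigenspace decomposition of $W$ coincides with its $L(0)$-eigenspace decomposition. Reading off from \eqref{eq2.1} that $[L(0),L(n)]=-nL(n)$ and $[L(0),I^{i}(n)]=-(n+\tfrac{i}{p})I^{i}(n)$, each $L(n)$ and each $I^{i}(n)$ maps $L(0)$-eigenspaces to $L(0)$-eigenspaces; hence, in view of the $\tfrac{1}{p}\Z$-grading $\L=\bigoplus_{n\in\Z}\L^{(n/p)}$ by $L(0)$-eigenvalues recorded just above, an $L_{0}$-eigenspace decomposition $W=\bigoplus_{\alpha}W_{(\alpha)}$ is precisely the datum of a $\tfrac{1}{p}\Z$-grading of $W$ as an $\L$-module --- equivalently, it is the statement that $L(x)$ and $I^{i}_{p}(x)$ are homogeneous $\sigma_{p}$-twisted fields of conformal weights $2$ and $1$, which is immediate from their mode expansions. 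Under this identification the two grading restrictions on the vertex operator algebra side (finite-dimensionality of each graded piece, and lower truncation of the occurring degrees within each coset of $\tfrac{1}{p}\Z$) are literally the two grading restrictions imposed on the $\L$-module side, so the first assertion follows.

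For the ``furthermore'', I would note that a $\sigma_{p}$-twisted $V_{\mathcal{N}_{p}}(\el,0)$-submodule of $W$ is, by definition, a subspace stable under all coefficients $Y_{\sigma_{p}}(a,x)_{n}$ with $a\in V_{\mathcal{N}_{p}}(\el,0)$, $n\in\tfrac{1}{p}\Z$. Since $\{\omega,N^{1},\ldots,N^{p-1}\}$ generates $V_{\mathcal{N}_{p}}(\el,0)$ as a vertex algebra, every such $a$ is obtained from these generators by iterated vertex-algebra operations, so by the iterate/commutator formulas for $\sigma_{p}$-twisted vertex operators from \cite{L1} (already used in the proofs of Theorems \ref{1} and \ref{2}) every $Y_{\sigma_{p}}(a,x)_{n}$ lies in the associative subalgebra of $\mathrm{End}(W)$ generated by the operators $L(m),I^{i}(m)$ ($m\in\Z$, $i=1,\ldots,p-1$) and the central scalars $\ell_{j}$. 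Therefore a subspace of $W$ is a $\sigma_{p}$-twisted $V_{\mathcal{N}_{p}}(\el,0)$-submodule if and only if it is an $\L$-submodule. Finally any such submodule $U$ is stable under $L_{0}=L(0)$, and since $L(0)$ acts semisimply on $W$ with eigenspaces $W_{(\alpha)}$ we obtain $U=\bigoplus_{\alpha}(U\cap W_{(\alpha)})$; in particular $U$ is graded.

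The step I expect to be the main obstacle is the reduction just used: showing rigorously that a subspace closed under the modes of the two generating fields $L(x)$ and $I^{i}_{p}(x)$ is automatically closed under the modes of \emph{all} twisted vertex operators $Y_{\sigma_{p}}(a,x)$, $a\in V_{\mathcal{N}_{p}}(\el,0)$. This requires the generation of $V_{\mathcal{N}_{p}}(\el,0)$ by $\{\omega,N^{1},\ldots,N^{p-1}\}$ (noted at its construction) together with careful bookkeeping via the twisted-operator calculus of \cite{L1}; once that is in place, the identification of twisted submodules with $\L$-submodules and their gradedness are routine.
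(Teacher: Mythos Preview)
Your proposal is correct and follows exactly the route the paper intends: the paper itself offers no proof beyond the sentence ``By Theorem~\ref{1} and Theorem~\ref{2} we have the following result,'' and you have simply supplied the details the paper leaves implicit (matching the $L_{0}$-grading with the $L(0)$-action, transporting the two grading restrictions verbatim, and using generation of $V_{\mathcal{N}_{p}}(\el,0)$ by $\{\omega,N^{1},\ldots,N^{p-1}\}$ together with the twisted iterate formula from \cite{L1} to identify submodules). There is nothing to add; your only flagged obstacle---that closure under the modes of the generating fields propagates to closure under all twisted modes---is indeed the one nontrivial point, and the standard twisted-operator calculus you cite handles it.
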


Hence, irreducible restricted $\L$-modules of level
$\el$ corresponds to irreducible
$\sigma_{p}$-twisted $V_{\mathcal{N}_{p}}(\el,0)$-modules.
	Recall that for any $h\in\C$, $L_{\L}(\el, h)$
	is an irreducible restricted
$\L$-module of level $\el$.
So
	it is an irreducible $\sigma_{p}$-twisted
$V_{\mathcal{N}_{p}}(\el,0)$-module.
In fact, we have
(the proof is similar to Theorem 4.7 of \cite{G1})
	\begin{thm}
		Let $\el=(\ell_{0},0,\ldots,0)\in\C^{\halfp+1}$. Then
		$\{L_{\L}(\el, h)\ | \ h\in\C\}$
is a complete list of irreducible
$\sigma_{p}$-twisted
$V_{\mathcal{N}_{p}}(\el,0)$-modules
for $V_{\mathcal{N}_{p}}(\el,0)$ viewed as a vertex operator algebra.
	\end{thm}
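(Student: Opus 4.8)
The plan is to combine Theorem~\ref{voamod} with the standard analysis of graded irreducible modules over a $\frac1p\Z$-graded Lie algebra. By Theorem~\ref{voamod}, the irreducible $\sigma_{p}$-twisted $V_{\mathcal{N}_{p}}(\el,0)$-modules for $V_{\mathcal{N}_{p}}(\el,0)$ viewed as a vertex operator algebra are exactly the irreducible restricted $\L$-modules $W$ of level $\el$ that are $\C$-graded by $L(0)$-eigenvalues and satisfy the two grading restrictions (lower truncation of the real parts of the occurring degrees, and finite-dimensionality of each graded piece). So it suffices to show that every such $W$ is isomorphic to $L_{\L}(\el,h)$ for a unique $h\in\C$, and conversely that each $L_{\L}(\el,h)$ is of this form.

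For the converse direction I would simply read off from the explicit basis of $M_{\L}(\el,h)$, and hence of its quotient $L_{\L}(\el,h)$, that $L_{\L}(\el,h)$ is $\C$-graded by $L(0)$ with grading concentrated in $h+\frac1p\N$, that the grading is bounded below, and that every graded piece is finite-dimensional; combined with the fact recorded just before the theorem that $L_{\L}(\el,h)$ is an irreducible restricted $\L$-module of level $\el$, Theorem~\ref{voamod} then gives that it is an irreducible $\sigma_{p}$-twisted $V_{\mathcal{N}_{p}}(\el,0)$-module in the required sense. Distinct values of $h$ give non-isomorphic modules, since $h$ is recovered as the minimal $L(0)$-eigenvalue.

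For the main direction, let $W$ be as above. Since $W$ is irreducible it is generated by any nonzero vector, hence its set of $L(0)$-eigenvalues lies in a single coset of $\frac1p\Z$; by lower truncation this coset has an element $h$ of smallest real part, and, as differences of such eigenvalues lie in $\frac1p\Z\subset\R$, the value $h$ is unique and $W_{(h)}\neq 0$. Using the $\frac1p\Z$-grading $\L=\bigoplus_{n\in\Z}\L^{(n/p)}$ by $L(0)$-eigenvalues, write $\L=\L_{+}\oplus\L^{(0)}\oplus\L_{-}$, where $\L_{\pm}$ collect the homogeneous components of strictly positive, resp.\ strictly negative, degree; note $\L^{(0)}\oplus\L_{-}=(\L)_{\geq 0}$, while $\L_{+}$ is spanned by $L(m)$ with $m<0$ and $I^{i}(n)$ with $n<0$. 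Pick $0\neq v\in W_{(h)}$. Since $\L_{-}$ strictly lowers the degree, $\L_{-}v=0$; since $L(0)$ and the $\ov{C}_{i}$ act as scalars on $W_{(h)}$, PBW gives $W=\U(\L)v=\U(\L_{+})\,\U(\L^{(0)})\,\U(\L_{-})v=\U(\L_{+})v$. Because every homogeneous component of $\L_{+}$ has strictly positive degree, the degree-$h$ part of $\U(\L_{+})v$ is exactly $\C v$, so $W_{(h)}=\C v$ is one-dimensional and carries the $(\L)_{\geq 0}$-module structure in which $L(m)$ ($m\geq 1$) and $I^{i}(n)$ ($n\geq 0$, $1\leq i\leq p-1$) act trivially, $L(0)$ acts by $h$, and $\ov{C}_{i}$ acts by $\ell_{i}$. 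This is precisely the $(\L)_{\geq 0}$-module used to define $M_{\L}(\el,h)$, so the universal property of the induced module yields a surjective $\L$-homomorphism $M_{\L}(\el,h)\to W$ sending ${\bf 1}_{h}\mapsto v$; irreducibility of $W$ forces its kernel to be the maximal proper submodule $T_{\L}(\el,h)$, whence $W\cong L_{\L}(\el,h)$.

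The only real subtlety, and the step I expect to require the most care, is the bookkeeping with the $\frac1p\Z$-grading: one must make sure that the $L(0)$-grading on a $\sigma_{p}$-twisted $V_{\mathcal{N}_{p}}(\el,0)$-module coming from $Y_{\sigma_{p}}$ is literally the $\frac1p\Z$-grading of $\L$ by $L(0)$-eigenvalues, so that $I^{i}(n)$ genuinely shifts degree by $-(n+\tfrac{i}{p})$ and the sums of component-degrees appearing in $\U(\L_{+})$ are strictly positive, and that the two grading restrictions for a twisted vertex operator algebra module translate verbatim into lower truncation plus finite-dimensional graded pieces for $\L$. Both of these are packaged into Theorem~\ref{voamod}, so once that correspondence is invoked the remainder is the routine graded-module argument sketched above; as in Theorem~4.7 of \cite{G1}, no further input is needed.
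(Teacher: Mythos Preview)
Your proposal is correct and follows exactly the route the paper indicates: the paper gives no explicit proof but defers to Theorem~4.7 of \cite{G1}, and your argument is precisely that standard one---invoke Theorem~\ref{voamod} to reduce to irreducible graded restricted $\L$-modules of level $\el$, then use the $\frac{1}{p}\Z$-grading, lower truncation, and PBW to produce a highest-weight vector and identify the module with $L_{\L}(\el,h)$. The bookkeeping caveat you flag in the last paragraph is indeed absorbed by Theorem~\ref{voamod}, so nothing further is needed.
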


\section{Construction of simple restricted $\L$-modules}
\label{sec3}
\def\theequation{3.\arabic{equation}}
\setcounter{equation}{0}

In this section, we first recall some definitions and notations.
Then we give a general construction of simple restricted $\L$-modules
of level $\el$.

For any $\ddd=(d_{1},\ldots,d_{p-1})\in\Z^{p-1}$, let
\begin{eqnarray}\label{eq:gd}
\L^+(\ddd)=\spanc{L_{pi},L_{j+p(i-d_j)},C_k\mid i\in\N,1\leq j\leq p-1,0\leq k\leq \halfp}
\end{eqnarray}
and
\begin{eqnarray}
\L^-(\ddd)=\spanc{L_{-pi},L_{j+p(-i-d_j)}\mid i\in\zp,1\leq j\leq p-1}.
\end{eqnarray}
Notice that $\L^+(\ddd)$ is a subalgebra of $\L$,
while $\L^-(\ddd)$ may not.
We have $\L=\L^-(\ddd)\oplus\L^+(\ddd)$
as a vector space.
Note that $\L^+(\0)$ is exactly $(\L)_{\geq 0}$ in the previous section,
where $\0=(0,\cdots,0)\in\Z^{p-1}$.

For any $\ur=(r_0,\rrr)\in\Z^{p}$ and $0\leq j\leq p-1$, denote
$$Z(j;r_j)=\{j+pi\mid i\in\Z,\ i>r_j\}\hspace{0.5cm}\text{ and }\hspace{0.5cm}
Z(\ur)=\bigcup_{j=0}^{p-1}Z(j;r_j).$$

\begin{definition}\label{modN}
{\em
For $k\in\N$, $\ddd=(d_1,\dots,d_{p-1})\in\Z^{p-1}$.
Let {\em $\catn$ be the category of $\L^+(\ddd)$-modules}
whose objects $N$ satisfying the below two conditions:
\begin{description}
	\item[(I)] all $\ljp k, 1\leq j\leq p-1$, act injectively on $N$;
	\item[(II)] $L_iN=0$ for all $i\in Z(k,k-d_1,\dots,k-d_{p-1})$.
\end{description}
}
\end{definition}

Denote by $\N_{-}^\infty$ the set of all infinite sequences
$\bff i^{(-)}=(\dots,i_{-2},i_{-1})$ with entries
in $\N$ such that the number of nonzero entries is finite.
Similarly, let $\N_{+}^\infty$ be the set of all infinite sequences
$\bff i^{(+)}=(i_0,i_1,i_{2},\dots)$ with entries in $\N$
such that the number of nonzero entries is finite.
For any $\bff i^{(-)}=(\dots,i_{-2},i_{-1})\in\N_{-}^\infty$ and
$\bff i^{(+)}=(i_{0},i_{1},i_{2},\dots)\in\N_{+}^\infty$,
we will always write
\begin{eqnarray}
\bff i=\bff i^{(-)}\bff i^{(+)}=(\dots,i_{-2},i_{-1}, i_0,i_1,i_{2},\dots)
\end{eqnarray}
Set
\begin{eqnarray}
\N^{\infty}=\{\bff i=\bff i^{(-)}\bff i^{(+)}
\mid \ \bff i^{(+)}\in\N_{+}^\infty,\ \bff i^{(-)}\in\N_{-}^\infty\}.
\end{eqnarray}
Let $\bff 0^{(\pm)}\in\N_{\pm}^\infty$ be the sequences with all entries 0.
For $i\geq 0$, denote by
$\bff e_{i}^{(+)}\in\N_{+}^\infty$ the sequence with the
$(i+1)$-th entry from left being 1
and 0 elsewhere.
For $i<0$, denote by
 $\bff e_{i}^{(-)}\in\N_{-}^\infty$ the sequence with the $|i|$-th entry
from right being 1
and 0 elsewhere.
Let
\[
\bff e_i=
\begin{cases}
\bff 0^{(-)}\bff e_{i}^{(+)},\;\;\;\;\;\mbox{if}\;\; i\geq 0;\\
\bff e_{i}^{(-)}\bff 0^{(+)},\;\;\;\;\; \mbox{if}\;\; i< 0.
\end{cases}
\]

For any $\bff i\in \N^\infty$,
we call $\weit i=\sum_{s\in\Z}|s|\cdot i_s$ the {\em weight} of $\bff i$, and denote
\begin{eqnarray}
\prodl i=\cdots  L_{-2}^{i_{-2}}L_{-1}^{i_{-1}}L_0^{i_0}L_1^{i_1}L_2^{i_2}\cdots\in\U(\L).
\end{eqnarray}

We have the {\em lexicographic order} $>$ and
the {\em reverse lexicographic order} $\reorder$ on $\N^\infty$ defined as follows.
Let $\bff i,\bff j\in\N^\infty$.
We say $\bff j>\bff i$ if there is $s\in\Z$ such that $j_s>i_s$ and $j_k=i_k$ for all $k<s$.
We say $\bff j\reorder\bff i$ if there exists $s\in\Z$ such that $j_s>i_s$ and $j_k=i_k$ for all $k>s$.

For $0\leq j\leq p-1$, let
\begin{eqnarray}
\L(j)=\sum_{i\in\Z}\C L_{j+pi}\oplus\C C_{j}.
\end{eqnarray}
Recall that $C_j=C_{p-j}$ if $j>\frac p2$.
It is easy to see that $\L(0)$ is a Virasoro subalgebra
of $\L$, and all $\L(j), 1\leq j\leq p-1$, are ideals of $\L$.
Let
$$\M=\left\{\bff i\in \N^\infty\mid \prodl i\in\U\left(\L(0)\right)\right\}\;
\text{ and }\;
\M'=\left\{\bff i\in \N^\infty\mid \prodl i\in\U\left(\oplus_{j=1}^{p-1}\L(j)\right)\right\}.
$$

\begin{definition}
{\em
Define a {\em principal total order} $\succ$ on $\mm$ as follows.
For $(\bff i,\bff j)$, $(\bff k,\bff l)\in\mm$,
we say $(\bff i,\bff j)\succ(\bff k,\bff l)$
if one of the following three statements stands
\begin{enumerate}
	\item[(i)] $\weit j>\weit l$;
	\item[(ii)] $\weit j=\weit l$ and $\bff j\reorder\bff l$;
	\item[(iii)] $\bff j=\bff l$ and $\bff i>\bff k$.
\end{enumerate}
}
\end{definition}

Let $N\in\catn$ be a
$\L^+(\ddd)$-module with the
central element $C_0$ acts as a scalar $p^{2}\ell_0$
(so that $\ov{C}_{0}$ acts as $\ell_{0}$)
and $C_{i}$ acts trivially for $1\leq i\leq\halfp$.
Form the induced $\L$-module
\begin{eqnarray}\label{eq:ind}
\indn=\U(\L)\otimes_{\U(\L^+(\ddd))}N,
\end{eqnarray}
where $\el=(\ell_{0},0,\ldots,0)\in\C^{\halfp+1}$.
For any $v\in\indn$, by the PBW theorem,
$v$ can be uniquely written as
\begin{equation}\label{eq3.1}
	\sum_{(\bff i,\bff j)\in\mm}\prodl i\prodl j\vtwo ij,
\end{equation}
where all $\vtwo ij\in N$ and only finitely many of them are nonzero.
We emphasize that here $\prodl j$ is of the form
$\prodl j=\cdots L_{-2p}^{j_{-2p}}L_{-p}^{j_{-p}}$.
Let $\supp v=\{(\bff i,\bff j)\in\mm \mid \vtwo ij\neq0\}$.
Denote by $\deg v$
the maximal element in $\supp v$ with
respect to the principal total order $\succ$ on $\mm$,
called the {\em degree} of $v$.

\begin{lem}\label{lem3.1}
	Let $k\in\N$, $\ddd\in\Z^{p-1}$ and $N\in\catn$
on which $C_{0}$ acts as $p^{2}\ell_{0}$ and $C_i$ acts as zero for $1\leq i\leq \halfp$.
	For any $v\in$ {\em $\indn$}$\backslash N$,
let $\deg v=(\bff i,\bff j)$.\\
    {\rm (1)} If $\bff j\neq{\bf 0}$, then $s=\min\{t\mid j_{-pt}\neq 0\}>0$
		and $\deg(L_{j+p(s+k-d_j)}v)=(\bff i,\bff j-\e_{-ps})$ for any $j=1,\ldots,p-1$.\\
	{\rm (2)} If $\bff j={\bf 0}$ and $\bff i\neq{\bf 0}$,
		denote $s-pr=\max\{l\mid i_l\neq 0\}$, then $1\leq s\leq p-1, r>d_s$
		and $\deg(L_{p(r+k-d_s)}v)=(\bff i-\e_{s-pr},\bff 0)$.
\end{lem}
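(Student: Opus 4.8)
The plan is to compute directly how the distinguished operators $L_{j+p(s+k-d_j)}$ (in case (1)) and $L_{p(r+k-d_s)}$ (in case (2)) act on a vector $v$ written in the PBW-canonical form \eqref{eq3.1}, and to track how the degree changes term by term. The key structural fact I would exploit is the splitting $\L=\L^-(\ddd)\oplus\L^+(\ddd)$ together with the bigraded structure: $\prodl j$ involves only the ``imaginary'' generators $L_{-p},L_{-2p},\dots$ (from $\L(0)$) while $\prodl i$ involves the remaining negative-mode generators. In case (1), since $\bff j\neq\bf 0$, the relevant operator is $L_{j+p(s+k-d_j)}$; one checks from \eqref{eq1.1} that the index $j+p(s+k-d_j)$ lies in the nonnegative range governing $\L^+(\ddd)$-annihilation \emph{relative to every PBW monomial except the ones that contain a compensating factor}, so commuting $L_{j+p(s+k-d_j)}$ to the right past $\prodl i$ produces only lower-degree terms, and past $\prodl j$ it picks out precisely the factor $L_{-ps}$, lowering it by one (the bracket $[L_{j+p(s+k-d_j)},L_{-ps}]=(-ps-(j+p(s+k-d_j)))L_{j+p(k-d_j)}$, and $L_{j+p(k-d_j)}$ kills $N$ since $j+p(k-d_j)\in Z(j;k-d_j)$). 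This is why one needs $s>0$: it guarantees $L_{-ps}$ actually occurs and that the bracket index $j+p(k-d_j)$ falls in the annihilating set $Z(k,k-d_1,\dots,k-d_{p-1})$.

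Concretely I would proceed as follows. First, establish the two side claims that $s=\min\{t\mid j_{-pt}\neq 0\}>0$ in case (1) and $r>d_s$ in case (2): these follow because the ``boundary'' generators $L_{-pd_j}$-type terms, or more precisely the generators with index in $Z(\cdot)$, annihilate $N$ and hence cannot appear in the canonical form of a nonzero element of $\indn\setminus N$; equivalently, the monomials $\prodl j$ that survive are built only from strictly negative $L(0)$-modes, and similarly for the $\oplus\L(j)$ part the surviving modes are bounded below by $-d_j$ in the appropriate sense. Second, fix $(\bff i,\bff j)=\deg v$ and expand $L_{?}\,\prodl i\prodl j\,\vtwo ij$: move $L_{?}$ rightward using \eqref{eq1.1}, collecting the ``leading'' term $\prodl i(\prodl j-\e_{-ps})\vtwo ij$ (resp. $(\prodl i-\e_{s-pr})\prodl 0\,\vtwo i0$) plus a sum of terms each of which is strictly smaller in the principal total order $\succ$ — either because the total weight $\weit j$ drops, or because some $L$-factor index moves in a way that decreases $\bff j$ in the reverse-lexicographic order, or because $\vtwo ij$ gets replaced by $L_{j+p(k-d_j)}\vtwo ij=0$. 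Third, one must check that this leading term is genuinely nonzero: here condition (I), the injectivity of $\ljp k$ on $N$, is exactly what is needed — the coefficient in front of $\vtwo ij$ after the dust settles is a nonzero scalar times an application of some injective operator (together with $(-ps-(j+p(s+k-d_j)))\neq 0$), so it does not vanish.

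The main obstacle, and the place where real care is required, is the bookkeeping in the second step: showing that \emph{every} other term produced by commuting $L_{?}$ past $\prodl i\prodl j$, and also every contribution coming from the \emph{other} summands $\prodl{i'}\prodl{j'}\vtwo{i'}{j'}$ with $(\bff i',\bff j')\prec(\bff i,\bff j)$ in $\supp v$, lands strictly below $(\bff i,\bff j-\e_{-ps})$ (resp. $(\bff i-\e_{s-pr},\bf 0)$) in $\succ$. This is a careful but essentially combinatorial verification: commuting a positive-mode operator into a product of negative-mode operators either shortens the product (decreasing weight), or replaces two factors by one of more negative index (again decreasing weight or the reverse-lex order), or annihilates via the $Z(\cdot)$ condition; and for the lower-degree summands the same analysis shows their images cannot reach the target degree. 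I would organize this as a short lemma on ``degree drop under commutation'' and then apply it uniformly. The definitions of $\M$, $\M'$, and the three-tier order $\succ$ (weight, then reverse-lex on $\bff j$, then lex on $\bff i$) are precisely engineered so that the imaginary part $\bff j$ is controlled first and the $\oplus\L(j)$ part $\bff i$ only afterwards, which is why case (2) is only invoked once $\bff j=\bf 0$ and can then be handled by the same template with the roles adjusted.
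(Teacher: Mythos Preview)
Your overall strategy is exactly the paper's: commute the chosen positive operator to the right through $\prodl k\prodl l$, identify the unique surviving leading term, and argue that every other contribution is strictly smaller in $\succ$. But your execution in case~(1) contains a genuine confusion that, taken at face value, breaks the argument.

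You write that the bracket $[L_{j+p(s+k-d_j)},L_{-ps}]$ produces $L_{j+p(k-d_j)}$ and that ``$L_{j+p(k-d_j)}$ kills $N$ since $j+p(k-d_j)\in Z(j;k-d_j)$''. Both assertions are wrong. By definition $Z(j;k-d_j)=\{j+pi\mid i>k-d_j\}$, so $j+p(k-d_j)\notin Z(j;k-d_j)$; in fact $L_{j+p(k-d_j)}=\ljp{k}$ is precisely the operator that acts \emph{injectively} on $N$ by condition~{\bf(I)}. The roles are reversed: it is the \emph{original} operator $L_{j+p(s+k-d_j)}$ (with $s>0$, hence $s+k-d_j>k-d_j$) that annihilates $N$ by condition~{\bf(II)}, so the pass-through term vanishes; the commutator term $\prodl k L^{\bff l-\e_{-ps}}\bigl(L_{j+p(k-d_j)}\vtwo kl\bigr)$ survives and is nonzero precisely because of injectivity. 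You partially recover this at the end when you invoke condition~{\bf(I)}, but the narrative in your second paragraph has the annihilating and injective operators swapped. (Minor: the bracket coefficient you quote is the Virasoro $(n-m)$; here $-ps\in p\Z$ and $j+p(s+k-d_j)\notin p\Z$, so the relevant relation is $[L_m,L_r]=rL_{m+r}$, giving coefficient $-(j+p(s+k-d_j))$.)

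A second, more structural point you miss is the paper's opening observation: since $C_1,\dots,C_{\halfp}$ act as zero, $[L_m,L_n]=0$ on $\indn$ whenever $m,n\notin p\Z$. Hence $L_{j+p(s+k-d_j)}$ commutes \emph{exactly} with $\prodl k$ (not merely ``up to lower-degree terms''), yielding
\[
L_{j+p(s+k-d_j)}\prodl k\prodl l\vtwo kl=\prodl k\,[L_{j+p(s+k-d_j)},\prodl l]\,\vtwo kl.
\]
This collapses most of the bookkeeping you flag as the ``main obstacle'': the only nontrivial commutators are with the factors $L_{-pt}$ in $\prodl l$, and the weight/reverse-lex comparison then goes through by a short case split on whether $\bff l=\bff j$ or not, exactly as in the paper.
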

\begin{proof}
Since the central elements $C_1,\dots,C_{\halfp}$
act trivially on $N$, we have
$[L_m,L_n]=0$ on $\indn$ for all $m,n\notin p\Z$.
Write $v$ as in (\ref{eq3.1}).

(1)
It is obvious that $s>0$.
For any $(\bff k,\bff l)\in\supp v$,
by condition {\bf (II)}, we have $L_{j+p(s+k-d_j)}\vtwo kl=0$.
Thus
$$
L_{j+p(s+k-d_j)}\prodl k\prodl l\vtwo kl
=\prodl k [L_{j+p(s+k-d_j)},\prodl l]\vtwo kl.
$$
For $(\bff k,\bff l)$ with $\bff l=\bff j$, we have $\bff i\geq \bff k$.
It is straightforward to check that
$$
\deg{(L_{j+p(s+k-d_j)}\prodl k\prodl j\vtwo kj)}=(\bff k,\bff j-\e_{-ps})\preceq (\bff i,\bff j-\e_{-ps}),$$
and the equality holds if and only if $\bff k=\bff i$.

For $(\bff k,\bff l)$ with $\bff l\neq\bff j$,
denote $\deg{(L_{j+p(s+k-d_j)}\prodl k\prodl l\vtwo kl)}=(\bff k_1,\bff l_1)\in\mm$.
If $\weit j>\weit l$, we have
$$\mathrm{\bff w}(\bff l_1)\leq \weit l-ps<\weit j-ps
      =\mathrm{\bff w}(\bff j-\e_{-ps}),$$
hence $(\bff i,\bff j-\e_{-ps})\succ(\bff k_1,\bff l_1)$.
Suppose $\weit j=\weit l$ and $\bff j\reorder\bff l$.
Let $m=\min\{t\in\zp\mid l_{-pt}\neq0\}$.
Notice that $m\geq s$.
It is easy to see that
$$\mathrm{\bff w}(\bff l_1)=\weit l-pm\leq \weit j-ps=\mathrm{\bff w}(\bff j-\e_{-ps}),$$
and the equality holds only if $m=s$.
When $m=s$, we have $(\bff k_1,\bff l_1)=(\bff k,\bff l-\e_{-ps})$.
Since $\bff j-\e_{-ps}\reorder \bff l-\e_{-ps}$,
we have
$$(\bff k_1,\bff l_1)=(\bff k,\bff l-\e_{-ps})\prec(\bff i,\bff j-\e_{-ps}).$$
So $(\bff i,\bff j-\e_{-ps})\succ(\bff k_1,\bff l_1)$ in both cases.
Therefore, $\deg{(L_{j+p(s+k-d_j)}v)}=(\bff i,\bff j-\e_{-ps})$ as desired.

(2) Clearly we have $1\leq s\leq p-1$ and $ r>d_s$.
We may write $v=\sum_{\bff k\in\M'}\prodl k\vone k,$
where $\vone k\in N$ and only finitely many of them are nonzero.
Since $r>d_{s}$, we have $L_{p(r+k-d_{s})}N=0$ by condition {\bf (II)}.
Then the statement follows from the fact that $L_{s+p(k-d_s)}$ is injective on $N$ and the equation
\begin{eqnarray*}
	&&{}L_{p(r+k-d_s)}\prodl k\vone k =[L_{p(r+k-d_s)},\prodl k]\vone k
\\
	&&{}=(s-pr)k_{s-pr}L^{\bff k-\e_{s-pr}}L_{s+p(k-d_s)}\vone k
	+	 \text{terms of degree lower than }(\bff i-\e_{s-pr},\bff 0).
\end{eqnarray*}
\end{proof}

Now applying Lemma \ref{lem3.1} repeatedly to any nonzero $v\in\indn$,
we get a nonzero vector in $N\cap\U(\L)v$.
This proves the following theorem.
\begin{thm}\label{thmsimp}
	Let $k,\ddd, N,\el$ be as in {\em Lemma \ref{lem3.1}}.
	If $N$ is simple, then the induced $\L$-module {\em $\indn$} is simple.
\end{thm}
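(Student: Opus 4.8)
The plan is to derive simplicity of $\indn$ from the combination of Lemma \ref{lem3.1} and the simplicity of $N$ as an $\L^+(\ddd)$-module, using a descent argument on the degree with respect to the principal total order $\succ$. First I would take an arbitrary nonzero $\L$-submodule $W$ of $\indn$ and pick a nonzero $v\in W$. I want to show $W\cap N\neq 0$; once that is established, simplicity of $N$ forces $N\subseteq W$, and since $N$ generates $\indn$ over $\U(\L)$ (it is the inducing module), we conclude $W=\indn$.

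To produce a nonzero element of $W\cap N$, I would argue by induction on the degree $\deg v=(\bff i,\bff j)\in\mm$ under $\succ$. If $v\in N$ already, we are done. Otherwise, at least one of $\bff i,\bff j$ is nonzero, and I apply the appropriate case of Lemma \ref{lem3.1}: if $\bff j\neq\bff 0$, set $s=\min\{t\mid j_{-pt}\neq0\}$ and replace $v$ by $L_{j+p(s+k-d_j)}v\in W$, which has strictly smaller degree $(\bff i,\bff j-\e_{-ps})$; if $\bff j=\bff 0$ but $\bff i\neq\bff 0$, set $s-pr=\max\{l\mid i_l\neq0\}$ and replace $v$ by $L_{p(r+k-d_s)}v\in W$, which has strictly smaller degree $(\bff i-\e_{s-pr},\bff 0)$. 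The key point is that in each step the new vector is \emph{nonzero} — this is exactly the content of the ``$\deg$'' assertions in Lemma \ref{lem3.1}, which guarantee the leading term does not cancel (the injectivity in condition \textbf{(I)} is what makes the leading coefficient survive). Since $\succ$ is a total order and the degrees strictly decrease, and since degrees with $\bff j=\bff 0$ eventually force $\bff i\to\bff 0$, after finitely many steps we reach a nonzero vector whose degree is $(\bff 0,\bff 0)$, i.e.\ a nonzero element of $N\cap W$.

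One should check that $\succ$ is well-founded on the relevant set of degrees so that the descent terminates; this is immediate here because each application of Lemma \ref{lem3.1} decreases a well-defined nonnegative integer invariant (first $\weit j$, then once $\bff j=\bff 0$ the total $\sum_s i_s$, or more carefully the lexicographic data $(\weit j,\bff j,\bff i)$ restricted to what actually occurs), so no infinite descending chain is possible. I would also note explicitly that the hypotheses on the central charges ($C_0$ acting as $p^2\ell_0$, $C_i$ acting as $0$ for $1\le i\le\halfp$) are precisely what is needed to put $\indn$ at level $\el$ and to make the commutator computations in Lemma \ref{lem3.1} valid, so no extra work is required there.

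The main obstacle is essentially bookkeeping rather than conceptual: one must be careful that the two cases of Lemma \ref{lem3.1} interlock correctly — applying case (2) only once $\bff j$ has been reduced to $\bff 0$ by repeated use of case (1) — and that the order $\succ$ genuinely decreases at every step across the transition between the two cases. Once the termination of this descent is in place, the passage from ``$W\cap N\neq0$'' to ``$W=\indn$'' is routine: $N$ simple gives $N\subseteq W$, and $\U(\L)N=\indn$ gives $W=\indn$. I do not expect any difficulty beyond verifying that the leading-term coefficients produced by Lemma \ref{lem3.1} are indeed invertible scalars times the injective operators $\ljp k$ acting on elements of $N$, which is already embedded in the statement of that lemma.
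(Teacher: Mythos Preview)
Your proposal is correct and matches the paper's own proof, which consists of a single sentence: ``Now applying Lemma \ref{lem3.1} repeatedly to any nonzero $v\in\indn$, we get a nonzero vector in $N\cap\U(\L)v$.'' Your write-up simply fills in the details of that sentence --- the descent on $\succ$, the two-case application of the lemma, and the passage from $W\cap N\neq 0$ to $W=\indn$ via simplicity of $N$ --- and your termination argument is sound since case (1) strictly decreases $\weit j$ and case (2) strictly decreases $\sum_s i_s$.
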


\section{Characterization of simple restricted $\L$-modules}
\def\theequation{4.\arabic{equation}}
\setcounter{equation}{0}

Recall that a $\L$-module $V$ is called restricted if for any $v\in V$ we have $L_kv=0$ for sufficiently large $k$.
In this section, we characterize simple restricted $\L$-modules as
$\L$-modules which are locally nilpotent (equivalently, locally finite)
 over certain positive part of $\L$.
Furthermore,
we give a classification of simple restricted $\L$-modules of level $\el=(\ell_{0},0,\ldots,0)\in\C^{\halfp+1}$.

\begin{definition}
	{\em
	Let $V$ be a module over a Lie algebra $\g$ and $x\in\g$.
	We say $x$ is {\em locally nilpotent} on $V$ if
 for any $v\in V$ there exists $n\in\N$ such that $x^nv=0$,
 and we say $x$ is {\em locally finite} on $V$
  if $\dim\sum_{i\in\N}\C x^iv<\infty$ for any $v\in V$.
  Moreover, $V$ is called a {\em locally nilpotent} $\g$-module
if for any $v\in V$ there exists $n\in\N$ such that $\g^nv=0$,
 and called a {\em locally finite} $\g$-module if
 $\dim\sum_{i\in\N}\g^iv<\infty$ for any $v\in V$.
	}
\end{definition}

For $n\in\Z_{+}$, denote
\begin{eqnarray}
\L^{(\geq n)}=\sum\limits_{i\geq n}\C L_{i},
\end{eqnarray}
which is a subalgebra of $\L$.
Now we give several
equivalent characterizations of simple restricted $\L$-modules.

\begin{thm}\label{thmeqch}
For a simple $\L$-module $V$, the following statements are equivalent:
	\begin{description}
		\item[(1)] there exists $k\in\zp$ such that all $L_i, i\geq k$, act on $V$ locally finitely.
		\item[(2)] there exists $k\in\zp$ such that all $L_i, i\geq k$, act on $V$ locally nilpotently.
		\item[(3)] there exists $n\in\zp$ such that $V$ is a locally finite $\L^{(\geq n)}$-module.
		\item[(4)] there exists $n\in\zp$ such that $V$ is a locally nilpotent $\L^{(\geq n)}$-module.
		\item[(5)] there exists $n\in\zp$ and a nonzero vector $v\in V$ such that $\L^{(\geq n)}v=0$.
		\item[(6)] $V$ is restricted.
	\end{description}
\end{thm}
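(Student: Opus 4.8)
The plan is to prove a cycle of implications $(6)\Rightarrow(5)\Rightarrow(4)\Rightarrow(2)\Rightarrow(1)$ together with $(4)\Rightarrow(3)\Rightarrow(1)$, and then close the loop with the decisive implication $(1)\Rightarrow(6)$. Several of these arrows are essentially formal. The implication $(6)\Rightarrow(5)$ is immediate: if $V$ is restricted, pick any nonzero $v\in V$; by definition $L_i v=0$ for all $i\geq n$ for some $n$, and since $\L^{(\geq n)}=\sum_{i\geq n}\C L_i$, this gives $\L^{(\geq n)}v=0$. The implication $(5)\Rightarrow(4)$ follows because $\L^{(\geq n)}$ is a subalgebra and, in a simple module $V=\U(\L)v$, any $w\in V$ is $w=uv$ for some $u\in\U(\L)$; writing $u$ in a PBW basis and using the finite-dimensionality of the relevant bracket computations, one shows $\L^{(\geq N)}w=0$ for $N$ large enough depending on $w$ (the key point is that $[\L^{(\geq n)},\L]\subseteq$ a controlled piece, so commuting generators of $\L^{(\geq N)}$ past $u$ lands back in $\L^{(\geq n)}\U(\L)v=0$). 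Then $(4)\Rightarrow(3)$ and $(4)\Rightarrow(2)$ and $(2)\Rightarrow(1)$ and $(3)\Rightarrow(1)$ are all trivial (locally nilpotent implies locally finite, and a statement about all of $\L^{(\geq n)}$ implies the statement about each generator $L_i$).

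\textbf{The heart of the theorem is $(1)\Rightarrow(6)$}, and this is where I expect the real work. Assume there is $k\in\zp$ such that every $L_i$, $i\geq k$, acts locally finitely on the simple module $V$. The first move is to upgrade "locally finite" to a uniform statement: I would fix a single nonzero $v_0\in V$ and show that there is a finite-dimensional subspace stable under a large positive part. Concretely, for $i\geq k$ the space $\sum_{j\geq 0}\C L_i^j v_0$ is finite-dimensional, so $L_i$ has an eigenvector in $V$; using the commutation relations $[L_m,L_n]=(n-m)L_{m+n}+\cdots$ and $[L_m,L_r]=rL_{m+r}$ (equations (\ref{eq1.1})) together with a standard argument (as in the Virasoro / Heisenberg--Virasoro locally-finite module literature, e.g.\ \cite{MZ2}, \cite{CG}), one produces a nonzero $v\in V$ and an integer $n_0$ with $L_i v=0$ for all $i\geq n_0$. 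The mechanism: if $L_{i_0}v=\lambda v$ with $\lambda\neq 0$ for some large $i_0$, then applying $L_{i_0}$ to $L_m v$ for $m$ large and using $[L_{i_0},L_m]=(m-i_0)L_{i_0+m}+(\text{central})$ forces, by a finiteness/triangularity argument on the $L_{i_0}$-action, that $L_m v=0$ for $m$ in an arithmetic progression, and then mixing in the $L_r$ with $r\notin p\Z$ via $[L_{i_0},L_r]=rL_{i_0+r}$ fills in the gaps so that $L_m v=0$ for \emph{all} sufficiently large $m$; the case where every large $L_i$ acts nilpotently on the chosen vector is handled directly since then a suitable power kills it and one descends.

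\textbf{Once such a "partial vacuum" vector $v$ exists}, i.e.\ $\L^{(\geq n_0)}v=0$, the final step is to propagate restrictedness to all of $V$. Since $V=\U(\L)v$ is simple, any $w\in V$ equals $uv$ for some $u\in\U(\L)$; write $u$ as a sum of PBW monomials in the $L_m$'s. For $i$ sufficiently large (larger than $n_0$ plus the maximal index appearing in $u$, suitably interpreted), commuting $L_i$ to the right past $u$ produces terms each ending in some $L_{i'}v$ with $i'\geq n_0$, hence zero; here one uses that the bracket of $L_i$ with any single generator $L_m$ lies in $\C L_{i+m}\oplus(\text{center})$, and that on $V$ the center acts by scalars (so the central corrections only appear when $i+m=0$, impossible for $i$ large). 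Therefore $L_i w=0$ for $i$ large, so $V$ is restricted. This completes the cycle. The main obstacle is genuinely the production of the vector $v$ with $\L^{(\geq n_0)}v=0$ out of the bare local-finiteness hypothesis $(1)$; the gap-$p$ structure means one must juggle the Virasoro-type generators $L_{pi}$ and the "abelian-like" generators $L_r$ ($r\notin p\Z$) simultaneously, but the $L_r$'s commute among themselves (modulo center) and transform simply under $L_{pi}$, so the argument is a mild elaboration of the classical Virasoro case rather than a fundamentally new difficulty.
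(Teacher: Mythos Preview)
Your overall architecture is the paper's: the real content is producing a partial-vacuum vector from $(1)$ (the paper's $(1)\Rightarrow(5)$) and then propagating to restrictedness ($(5)\Rightarrow(6)$), and your propagation step is essentially the paper's induction on $|\bff i|$. One small mislabelling: what you describe under ``$(5)\Rightarrow(4)$'' --- finding $N=N(w)$ with $\L^{(\geq N)}w=0$ --- is actually $(5)\Rightarrow(6)$. Local nilpotence over a \emph{fixed} $\L^{(\geq n)}$ requires a different (still easy) argument; as written your cycle does not close on $(3)$ and $(4)$.

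The genuine gap is in your mechanism for $(1)\Rightarrow(5)$. From $L_{i_0}v=\lambda v$ you get $(L_{i_0}-\lambda)L_{ki_0+m}v=c_k L_{(k+1)i_0+m}v$, so $W=\mathrm{span}\{L_{ki_0+m}v:k\geq 0\}$ is $L_{i_0}$-stable and, by local finiteness, finite-dimensional. But this does \emph{not} make $L_{i_0}$ upper-triangular with only the eigenvalue $\lambda$ on $W$: once $w_d:=L_{di_0+m}v$ falls back into $\mathrm{span}\{w_0,\dots,w_{d-1}\}$, the matrix of $L_{i_0}-\lambda$ acquires a nontrivial last column and need not be nilpotent, so nothing forces any $L_{ki_0+m}v$ to vanish. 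The paper's argument supplies the idea you are missing. One first shows $\sum_{i\geq 0}\C L_{n+i}v$ is finite-dimensional (this much is close to what you sketch), then assembles a finite-dimensional $\L^{(\geq n)}$-submodule $V'=\sum_{m_0,\dots,m_l\geq 0}\C L_n^{m_0}\cdots L_{n+l}^{m_l}v$, and finally runs a \emph{minimal-relation} trick: choose the smallest $s\geq 0$ for which some $(L_m+a_1L_{m+1}+\cdots+a_sL_{m+s})V'=0$ with $m\in p\Z_{\geq n}$ and $a_s\neq 0$; applying $L_m$ and using the brackets yields a strictly shorter relation, forcing $s=0$, i.e.\ $L_mV'=0$ for some $m\in p\zp$. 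Then $0=L_jL_mV'$ for $j\geq n$ gives $L_{m+j}V'=0$, hence $\L^{(\geq 2m+1)}V'=0$. This step --- passing to a finite-dimensional $\L^{(\geq n)}$-invariant subspace and extracting a single annihilating $L_m$ via a shortest-relation argument --- is exactly what your eigenvalue/triangularity dichotomy cannot deliver.
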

\begin{proof}
	The conclusions
$(5)\Rightarrow(4)\Rightarrow(2)$,
$(3)\Rightarrow(1)$ and $(6)\Rightarrow (5)$ are clear.
In the following we prove that
$(2)\Rightarrow(5)\Rightarrow(3)$ and $(1)\Rightarrow(5)\Rightarrow(6)$.

	$(2)\Rightarrow(5)$:
Let $n\in p\N$, $n\geq k$.
	By assumption, $L_n$ acts locally nilpotently on $V$.
	Then there exists $0\neq v\in V$ such that $L_nv=0$.
For $1\leq j\leq n$, set $w_j=L_{n+j}v\in V$.
Let $m\in\zp$ be minimal such that $L_n^mw_j=0$ for all $1\leq j\leq n$.
	By induction, we can show that $L_{ni+n+j}v\in\C L_n^iw_j$ for all $i\in\N$.
	So $\L^{(\geq mn+n+1)}v=0$.

	$(5)\Rightarrow(3)$:
	By the PBW theorem and the simplicity of $V$, $V$ has a spanning
	set consisting of vectors of the form
	\begin{equation}\label{eq4.1}
		\cdots L_{n-2}^{i_{n-2}}L_{n-1}^{i_{n-1}}v,\ \ \ i_j\in\N,\  j\leq n-1.
	\end{equation}
	It suffices to show that each of the vector in (\ref{eq4.1})
 generates a finite dimensional $\L^{(\geq n)}$-submodule of $V$.
	Let $I$ be the annihilator of such a vector in $\U(\L^{(\geq n)})$.
	Let $m=\sum\limits_{j\leq n-1}|j|\cdot i_j$.
Then $0\leq m<\infty$.
 It is straightforward to check that
	$$\{L_i\mid i>m+n\}\cup\{L_{j_1}\cdots L_{j_s}\mid s\geq m+1,j_1\geq j_2\geq \dots\geq j_s>n\}\subset I,$$
	which implies that $I$ has a finite codimension in $\U(\L^{(\geq n)})$.

	$(1)\Rightarrow(5)$:
	First, for $n\in p\N$, $n\geq k$, we can find a nonzero $v\in V$
such that $L_nv=av$ for some
 $a\in\C$.
	For $j>n$, set
\begin{eqnarray}
N(j)=\sum_{l\in\N}\C L_n^lL_jv,
\end{eqnarray}
 which is finite dimensional by assumption.
	For any $l\geq 0$, we have
	\begin{eqnarray}
(j+nl-n\dt_{j,p\Z})L_{nl+n+j}v=[L_n,L_{nl+j}]v=(L_n-a)L_{nl+j}v.
\end{eqnarray}
Here the symbol $\dt_{j,p\Z}$ equals 1 if $j\in p\Z$, and 0 otherwise.
Therefore, $L_{nl+j}v\in N(j)$ implies that $L_{nl+n+j}v\in N(j)$.
	By induction on $l$ we show that  $L_{nl+j}v\in N(j)$ for all $l\in\N$.
	In particular, $\sum_{l\in\N}\C L_{nl+j}v$ is finite dimensional for all $j>n$. Hence
	\begin{eqnarray*}
\sum_{i\in\N}\C L_{n+i}v=\C L_n v+\sum_{j=n+1}^{2n}\left(\sum_{l\in\N}\C L_{nl+j}v\right)
\end{eqnarray*}
	is finite dimensional. Thus we can find an $l\in\zp$ such that
	\begin{equation}\label{eq4.2}
		\sum_{i\in\N}\C L_{n+i}v=\sum_{i=0}^l\C L_{n+i}v.
	\end{equation}
	Denote
	\begin{eqnarray*}
V'=\sum_{m_0,m_1,\dots,m_l\geq 0}\C L_n^{m_0}L_{n+1}^{m_1}\cdots L_{n+l}^{m_l}v,
\end{eqnarray*}
	which is nonzero and finite dimensional by assumption.
 Using Lie bracket of $\L$ and (\ref{eq4.2})
 we show that $V'$ is a $\L^{(\geq n)}$-module.

	Denote by $s$ the smallest nonnegative integer such that
$$(L_m+a_1L_{m+1}+\dots+a_sL_{m+s})V'=0$$
 for some
 $m\geq n, m\in p\Z, a_i\in\C$ and $a_s\neq 0$.
 If $s>0$, applying $L_m$, we get
	$$(a_1[L_m,L_{m+1}]+\dots+a_s[L_m,L_{m+s}])V'=0.$$
	This contradicts to the choice of $s$.
	Hence $s=0$ and $L_mV'=0$ for some $m\in p\zp$, $m\geq n$.
Now for any $j\geq n$, we have
	$$0=L_jL_mV'=[L_j,L_m]V'+L_mL_jV'=-(j-m\dt_{j,p\Z})L_{m+j}V'.$$
	In particular, $L_iV'=0$ for all $i>2m$.

	$(5)\Rightarrow(6)$: Fix an $n\in\zp$ and a nonzero vector $v\in V$ such that $\L^{(\geq n)}v=0$.
	For any $\bff i\in\N^\infty$, recall the weight $\weit i$,
we claim that $L_kL^{\bff i}v=0$ if $k\geq n+\weit i$.

	We prove this claim by induction on $|\bff i|:=\sum\limits_{s\in\Z}i_s$.
	If $|\bff i|=1$, write $\bff i=\e_i$ for some $i\in\Z$.
Since $k\geq|i|+n$, we get
	$$L_kL_iv=L_iL_kv+[L_k,L_i]v=0.$$
Suppose $|\bff i|>1$ and $L_tL^{\bff j}v=0$ for any $t,\bff j$
with $|\bff j|<|\bff i|$ and $t\geq n+\weit j$. Then
	\begin{eqnarray}
L_kL^{\bff i}v=\sum_{j\in\Z,\;\bff l\in\N^\infty}L^{\bff l}[L_k,L_j]L^{\bff i-\bff l-\e_j}v.
\end{eqnarray}
	For each $j$ appearing in the above equation,
we have $k+j\geq n+\bff w (\bff i-\bff l-\e_j)$
(note that $k\geq n+\weit i$).
Hence $L_kL^{\bff i}v=0$ by the inductional hypothesis.

	Since $V$ is simple, any nonzero vector $w\in V$
is a linear combination of finitely many vectors $L^{\bff i}v$
for some $\bff i\in\N^\infty$.
	By the claim, we obtain that $L_kw=0$ for sufficiently large $k$.
\end{proof}

We remark here that in the above proof of
the conclusions $(1)\Rightarrow(5)$ and $(2)\Rightarrow(5)$
the simplicity of $V$ is redundant.
Furthermore, we have the following corollary.
\begin{cor}
Let $m\in\Z_{+}$ and suppose $V$ is a $\L$-module on which $\L^{(\geq m)}$ act locally nilpotently.
 Then there exists a nonzero $v\in V$ such that $\L^{(\geq m)}v=0$.
\end{cor}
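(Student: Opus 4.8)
The plan is to deduce this corollary from the implication $(2)\Rightarrow(5)$ in Theorem \ref{thmeqch}, together with the remark immediately preceding the statement that, in the proof of $(2)\Rightarrow(5)$, the simplicity of $V$ is not needed. More precisely, the hypothesis here is that $\L^{(\geq m)}$ acts locally nilpotently on $V$; in particular each single operator $L_i$ with $i\geq m$ acts locally nilpotently on $V$. Taking $k=m$, this is exactly the hypothesis of statement (2) of Theorem \ref{thmeqch} (with $V$ not assumed simple). So the first step is to recall the argument of $(2)\Rightarrow(5)$: pick $n\in p\N$ with $n\geq m$; since $L_n$ is locally nilpotent there is a nonzero $v_0\in V$ with $L_n v_0=0$; then, setting $w_j=L_{n+j}v_0$ for $1\leq j\leq n$ and choosing $\mu\in\zp$ minimal with $L_n^{\mu}w_j=0$ for all $j$, one checks by induction (using the relation $[L_n,L_{ni+j}]v_0\in\C L_{ni+n+j}v_0$, valid since the bracket coefficient is nonzero for $i\geq 1$) that $L_{ni+n+j}v_0\in\C L_n^{i}w_j$ for all $i\in\N$, whence $\L^{(\geq \mu n+n+1)}v_0=0$.

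The only gap between that conclusion and the statement of the corollary is that $(2)\Rightarrow(5)$ produces a vector $v_0$ annihilated by $\L^{(\geq n')}$ for some $n'=\mu n+n+1$ which may be much larger than $m$, whereas the corollary asks for annihilation by $\L^{(\geq m)}$ itself. So the second step is to bridge this gap. For this I would invoke the implication $(5)\Rightarrow(6)$ of Theorem \ref{thmeqch} — or rather its proof, which again does not use simplicity for the construction of a small annihilator — no: more directly, I would argue as follows. Having found $v_0\neq 0$ with $\L^{(\geq n')}v_0=0$, consider the (nonzero, finite-dimensional is not needed) subspace obtained by applying $L_m,L_{m+1},\dots,L_{n'-1}$ repeatedly. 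Since each of these finitely many operators is locally nilpotent on $V$ and they generate a finite-dimensional nilpotent Lie subalgebra modulo $\L^{(\geq n')}$... this requires care, so let me restructure.

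A cleaner second step: apply the claim proved in $(5)\Rightarrow(6)$ directly. That claim states that if $\L^{(\geq n')}v_0=0$ then $L_kL^{\bff i}v_0=0$ whenever $k\geq n'+\weit i$, for all $\bff i\in\N^\infty$; its proof is a pure induction on $|\bff i|$ using the Lie bracket and makes no use of simplicity. Now let $\g_0$ be the finite-dimensional nilpotent Lie algebra $\L^{(\geq m)}/\L^{(\geq n')}$ acting on the subspace $U=\U(\L^{(\geq m)})v_0\subset V$; by the displayed claim, $U$ is spanned by vectors $L^{\bff i}v_0$ with bounded weight, hence $U$ is finite-dimensional and is a module over the finite-dimensional nilpotent Lie algebra spanned by the images of $L_m,\dots,L_{n'-1}$. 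On this finite-dimensional module every element acts nilpotently (each $L_i$, $m\le i<n'$, is locally nilpotent on $V$, hence nilpotent on the finite-dimensional $U$), so by Engel's theorem there is a nonzero $v\in U$ with $L_i v=0$ for all $m\leq i\leq n'-1$; combined with $\L^{(\geq n')}U=0$ this gives $\L^{(\geq m)}v=0$, as required.

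The main obstacle is exactly this last bridging argument: one must be sure that $U=\U(\L^{(\geq m)})v_0$ is finite-dimensional so that Engel's theorem applies. This is precisely what the weight-bound claim from the proof of $(5)\Rightarrow(6)$ delivers — the vectors $L^{\bff i}v_0$ with $\weit i\geq n'-m$ contributing $L_k$-factors with $k\geq m$ are killed, so only finitely many monomials survive. Once finite-dimensionality is in hand, everything else is formal. I would therefore organize the write-up as: (i) quote $(2)\Rightarrow(5)$ to get $v_0$ with $\L^{(\geq n')}v_0=0$; (ii) quote the weight-bound claim to conclude $\dim\,\U(\L^{(\geq m)})v_0<\infty$; (iii) apply Engel's theorem to the finite-dimensional nilpotent Lie algebra $\L^{(\geq m)}/\L^{(\geq n')}$ acting on this space.
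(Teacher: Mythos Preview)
Your step (ii) has a genuine gap. The weight-bound claim from $(5)\Rightarrow(6)$ asserts $L_k L^{\bff i}v_0=0$ once $k\geq n'+\weit i$; it kills a product only when its \emph{outermost} factor has index large compared with the weight of what remains. For monomials built from $L_m,\dots,L_{n'-1}$ this never happens: in the paper's PBW order such a monomial reads $L_m^{i_m}\cdots L_{n'-1}^{i_{n'-1}}v_0$, the outermost factor is $L_m$, and one would need $m\geq n'+\text{(weight of the rest)}$, which is impossible since $m<n'$. In particular the claim gives no information whatsoever on $L_m^{a}v_0$, and there are modules with $\L^{(\geq n')}v_0=0$ but $\dim\,\U(\L^{(\geq m)})v_0=\infty$ (for instance the induced module $\U(\L)\otimes_{\U(\L^{(\geq n')})}\C$ with $v_0=1\otimes 1$). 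So the claim from $(5)\Rightarrow(6)$ does not deliver finite-dimensionality of $U$.

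Your strategy can be salvaged, but by a different argument: use the \emph{algebra} local-nilpotency hypothesis directly to obtain $N$ with $(\L^{(\geq m)})^{N}v_0=0$; verify (a one-line commutator induction you left implicit) that $\L^{(\geq n')}$ is an ideal of $\L^{(\geq m)}$ annihilating all of $U$; then $U=\sum_{j<N}\mathfrak n^{\,j}v_0$ with $\mathfrak n=\L^{(\geq m)}/\L^{(\geq n')}$ finite-dimensional, so $\dim U<\infty$ and Engel applies. The paper's proof, by contrast, avoids Engel and finite-dimensionality altogether via a direct descent: take $n$ minimal with $\L^{(\geq n)}v=0$ for some $v\neq0$; if $n>m$, local nilpotency of the single operator $L_{n-1}$ yields $w=L_{n-1}^{k}v\neq0$ with $L_{n-1}w=0$, and an easy induction on $j$ using the bracket shows $L_iL_{n-1}^{j}v=0$ for all $i\geq n$, whence $\L^{(\geq n-1)}w=0$, contradicting minimality of $n$. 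This is shorter and, notably, uses only local nilpotency of one $L_i$ at a time rather than of the whole subalgebra.
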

\begin{proof}
	Let $n\geq 1$ be minimal such that $\L^{(\geq n)}v=0$ for some $0\neq v\in V$.
The existence of such $n$ and $v$ is assured by Theorem \ref{thmeqch} (5),
and clearly $n\geq m$.

Suppose $n>m$. Then by the choice of $n$ we have $L_{n-1}v\neq 0$.
Since $L_{n-1}$ is locally nilpotent on $V$,
there exists $k\in\zp$ such that $w:=L_{n-1}^kv\neq0$ and $L_{n-1}w=L_{n-1}^{k+1}v=0$.
By induction on $j$ we can prove that
$L_{i}L_{n-1}^{j}v=0$ for any $i>n-1$, $j\geq0$.
In particular,
$L_iw=0$ for all $i\geq n-1$.
Thus $\L^{(\geq n-1)}w=0$, which contradicts to the choice of $n$.
This proves the corollary.
\end{proof}


To classify simple restricted $\L$-modules we shall recall some more notions.
The algebra $\L$ has a triangular decomposition
\begin{equation}\label{eq4.3}
	\L=\L^{-}\oplus\h\oplus\L^{+}
\end{equation}
where $\h=\C L_{0}\oplus\sum\limits_{j=0}^{\halfp}\C C_{j}$
 is a Cartan subalgebra of $\L$,
and $\L^{\pm}=\oplus_{\pm m>0}\C L_{m}.$
A $\L$-module $V$ is called a {\em weight module}
if $\h$ acts diagonalizably on $V$, i.e., we have the weight space decomposition
$V=\oplus_{\lambda\in\h^{*}}V_{\lambda},$ where
$$V_{\lambda}=\{v\in V\ | \ xv=\lambda(x)v\;\mbox{for all} \;x\in\h\}.$$
A weight $\L$-module $V$ is called a {\em highest weight module} if
$\L^{+}v=0$ and $V=\U(\L)v$ for some nonzero vector $v$ in $V$.

The following theorem is
a classification of simple restricted $\L$-modules with $C_1,\dots,C_{\halfp}$ acting trivially.

\begin{thm}\label{thm4.4}
	Let $V$ be a simple restricted $\L$-module with $C_1,\dots,C_{\halfp}$ acting trivially.
Then $V$ is a highest weight module,
or there exist $k\in\N$, $\ddd\in\Z^{p-1}$, $\el=(\ell_{0},0,\ldots,0)\in\C^{\halfp+1}$
and a simple $\L^+(\ddd)$-module $N\in\catn$ such that {\em $V\cong\indn$}.
\end{thm}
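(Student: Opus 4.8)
The plan is to exploit Theorem \ref{thmeqch} to reduce to a situation where we can locate a suitable submodule on which the positive part acts as prescribed, and then invoke the universal property of the induced module. Since $V$ is simple and restricted, part (5) of Theorem \ref{thmeqch} gives a nonzero $v_0\in V$ and an $n\in\zp$ with $\L^{(\geq n)}v_0=0$. First I would choose $n$ minimal with this property (over all nonzero vectors), and set $k=\lceil n/p\rceil$ or a similar integer so that $\L^{(\geq pk)}v_0=0$; rescaling indices via $L_{pi},L_{j+pi}$, this says that $L_{pi}v_0=0$ and $L_{j+pi}v_0=0$ for all large $i$. The idea is that the subspace
$$N:=\{w\in V\mid L_iw=0\text{ for all }i\in Z(\ur)\text{ for some }\ur\}$$
of ``$\L^+$-finite'' vectors, suitably organized, will be the module we want — but to match Definition \ref{modN} we must pin down a single $\ddd$ and $k$ that work uniformly.

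The key step is the following dichotomy. Consider the $p-1$ elements $L_j$, $j=1,\dots,p-1$ (or rather the families $L_{j+p\Z}$), acting on $V$. Because the $\L(j)$, $1\leq j\leq p-1$, are ideals with $[L_r,L_s]=r\dt_{r+s,0}C_{\widetilde r}$ and all $C_i$ ($i\geq 1$) act trivially, each family $\{L_{j+pi}\}_i$ generates an abelian subalgebra acting on $V$; moreover $[L_{pm},L_{j+pi}]=(j+pi-pm)L_{j+p(i+m)}$ shifts the index. For a fixed nonzero $v$ killed by $\L^{(\geq n)}$, either there is some $j\in\{1,\dots,p-1\}$ and some direction in which $L_{j+pi}$ acts injectively on the $\L$-submodule generated by the ``lowest'' such vectors — leading to the induced-module case — or else, for every $j$, enough of the $L_{j+p\Z}$ annihilate a common vector, which together with the Virasoro part $\L(0)$ forces $V$ to be highest weight. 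Concretely: if for some choice we can produce a nonzero $v$ with $\L^+v=0$ (all $L_m v=0$, $m>0$), then since $V$ is simple and $L_0$ acts — after adjusting by the restricted condition — locally finitely, $v$ can be taken to be an $L_0$-eigenvector, and $V=\U(\L)v$ is highest weight. Otherwise, set $d_j$ to record, for each $j$, the largest ``block'' where $L_{j+p(\cdot)}$ fails to annihilate the candidate vector, define $N$ as the $\L^+(\ddd)$-submodule generated accordingly, check conditions \textbf{(I)} and \textbf{(II)} of Definition \ref{modN} hold by construction (II is the annihilation on $Z(k,k-d_1,\dots,k-d_{p-1})$; I is injectivity of $\ljp k$, which is exactly the failure of the highest-weight alternative), and then the universal property of $\indn=\U(\L)\otimes_{\U(\L^+(\ddd))}N$ yields a surjection $\indn\twoheadrightarrow V$. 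By Theorem \ref{thmsimp}, if $N$ is simple then $\indn$ is simple, so the surjection is an isomorphism; one must additionally argue $N$ itself is simple as an $\L^+(\ddd)$-module, which follows because any proper submodule of $N$ would, via the same surjection, contradict simplicity of $V$.

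The main obstacle I anticipate is the bookkeeping needed to extract a \emph{single} pair $(k,\ddd)$ that simultaneously works for all $p-1$ index classes and makes $N$ land honestly in $\catn$ — in particular ensuring that the injectivity condition \textbf{(I)} is not vacuously or accidentally violated, and that $N$, which is a priori just some $\L^+(\ddd)$-stable subspace, is genuinely nonzero and generates $V$ over $\L$. The argument that $\L^{(\geq n)}v=0$ can be ``spread out'' to $L_iN=0$ for all $i\in Z(k,k-d_1,\dots,k-d_{p-1})$ uses the commutation relations $[L_{pm},L_{j+pi}]=(j+p(i-m))L_{j+p(i+m)}$ repeatedly and is routine but must be done carefully because of the possible vanishing of structure constants (e.g. when $j+p(i-m)=0$, which cannot happen for $1\leq j\leq p-1$, so in fact the shifts are always nondegenerate on the $j\neq 0$ part — a point worth noting as it makes condition \textbf{(I)} robust). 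Once these are in place, the passage to the isomorphism is immediate from the results already proved.
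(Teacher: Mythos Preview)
Your overall strategy matches the paper's, but two steps do not go through as written.

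First, your route to the isomorphism $\indn\cong V$ is logically inverted. You want to prove $N$ simple first (``any proper submodule of $N$ would, via the same surjection, contradict simplicity of $V$'') and then invoke Theorem~\ref{thmsimp}. But that claim fails: if $0\neq N'\subsetneq N$ then $\U(\L)N'=V$ simply because $V$ is simple, which is no contradiction, so you cannot conclude $N$ is simple this way. The paper runs the argument in the opposite order. With $N$ only known to lie in $\catn$, Lemma~\ref{lem3.1} already applies to $\indn$ and shows that any nonzero element of $\ker\pi$ can be pushed, by acting with suitable $L_m$, into $N\cap\ker\pi$; since $\pi|_{1\otimes N}$ is injective this forces $\ker\pi=0$. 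Simplicity of $N$ is then a \emph{consequence} of $V\cong\indn$, not an input to it.

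Second, your treatment of the highest-weight alternative and the extraction of $(k,\ddd)$ are both loose, and the paper resolves them simultaneously with one clean device. Define $N_{r_0,\dots,r_{p-1}}=\{w\in V: L_iw=0\text{ for all }i\in Z(r_0,\dots,r_{p-1})\}$ and take $(k_0,\dots,k_{p-1})$ minimal with $N:=N_{k_0,\dots,k_{p-1}}\neq 0$. Minimality immediately forces each $L_{j+pk_j}$ to act injectively on $N$ (this \emph{is} condition~\textbf{(I)}; no further bookkeeping is needed). If $k_0<0$ then $L_0N=0$, so $L_0$ has an eigenvector, $V$ is a weight module, and Mathieu's lemma yields the highest-weight conclusion; otherwise set $k:=k_0\geq 0$, $d_j:=k-k_j$, and $N\in\catn$ on the nose. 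Your alternative criterion ``$\L^+v=0$ implies $L_0$ acts locally finitely'' is unjustified: nothing in the restricted condition controls $L_0$, and the paper never makes such a claim.
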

\begin{proof}
	For any $\underline{r}=(r_0,\rrr)\in\Z^{p}$, let
	\begin{eqnarray}
N_{r_0,\rrr}=\{w\in V\mid L_iw=0\text{ for all }i\in Z(\underline{r})\}.
\end{eqnarray}
	By Theorem \ref{thmeqch} (5),
$N_{r_0,\rrr}\neq 0$ for sufficiently large $r_0,\rrr$.
Let $k_0,\kkk$ be the smallest such that $N:=N_{k_0,\kkk}\neq0$.
Then all the elements $L_{j+pk_j}, 0\leq j\leq p-1$, act injectively on $N$.
Otherwise, suppose for example the action of $L_{1+pk_1}$ is not injective,
then $N_{k_0,k_1-1,k_2,\dots,k_{p-1}}\neq 0$ contradicts to the choice of $k_1$.

If $L_0$ has an eigenvector, say $w$ in $V$,
 then by the simplicity of $V$ we see that $V=\U(\L)w$ is a weight module.
 It follows from Lemma 1.6 in \cite{M} that $V$ is a highest weight module.
 Note that if $k_0<0$, then
$V$ admits an $L_0$-eigenvector (with eigenvalue 0),
hence $V$ is a highest weight module.
 We assume that $k:=k_0\geq 0$ in the following.

Denote by $d_i=k-k_i\in\Z$ for all $1\leq i\leq p-1$.
Let $\ddd=(d_{1},\ldots,d_{p-1})\in\Z^{p-1}$.
It is directly checked that $N$ is a $\L^+(\ddd)$-module in $\catn$
(note that $C_iV=0$ for all $i=1,\ldots,\halfp$).
There is a canonical $\L$-module epimorphism
$$\pi:\indn\longrightarrow V,\;\;1\otimes v\mapsto v\;\; \mbox{for any}\;\; v\in N.$$
It follows from Lemma \ref{lem3.1} that $\ker\pi=0$, that is, $V\cong\indn$.
Then $N$ is a simple $\L^+(\ddd)$-module by the property of induced modules.
\end{proof}

\begin{rmk}
	{\em	In the above proof, if $k=0$ and all $d_i>0,\ 1\leq i\leq p-1$,
then all $L_j,\ j>0$, act trivially on the simple $\L$-module $V$.
By a same proof as in \cite{MZ1},
one can show that $V$ is a weight module, thus a highest weight module.}
\end{rmk}


\section{Examples}
\def\theequation{5.\arabic{equation}}
\setcounter{equation}{0}

In this section, we present several examples of simple restricted $\L$-modules
of level $\el=(\ell_{0},0,\ldots,0)\in\C^{\halfp+1}$.

\subsection{Modules from the Virasoro algebra}

Note that the subalgebra $\L(0)$ of $\L$ is a Virasoro algebra.
 In \cite{MZ2} some concrete examples of simple $\L(0)$-modules
 on which a positive part of $\L(0)$ act locally nilpotently are given.
  Let $V$ be any such $\L(0)$-module.
Set $L_iV=0$ for all $i\notin p\Z$.
Then $V$ becomes a simple restricted $\L$-module.


\subsection{Simple $\L^+(\ddd)$-modules with $k=0$}

In this subsection we fix $k=0$.
Let $\ddd=(d_1,\dots,d_{p-1})\in\Z^{p-1}$ be
such that at least one of $d_i$ is not positive.
Let $S=\{i\mid 1\leq i\leq p-1,\;d_i\leq 0\}$
(then $S\neq\emptyset$)
and $\ov S=\{1,\dots,p-1\}\setminus S$.
Take $\theta=(\theta_i)_{i\in S}$
and $\eta=(\eta_j)_{j\in\ov S}$,
where $\theta_i, \eta_j$ are nonzero complex numbers.

Let $\ell_{0}\in\C$ and let $R(\theta,\eta)$
denote the $\L^+(\ddd)$-module $\U(\L^+(\ddd))/J$,
where $J$ is the left ideal of
 $\U(\L^+(\ddd))$ generated by
$$\begin{aligned}
&\ L_{i-pd_i}-\theta_i,\hspace{3mm}\ L_{r-pd_r}-\eta_r,\hspace{5mm}
\ i\in S,\hspace{3mm}\ r\in\ov S,\\
	&\ L_{pj}, \ L_{t+ps}, \ C_0-p^{2}\ell_{0},\ C_1,\dots,C_{\halfp}
\hspace{3mm}1\leq t\leq p-1, s>-d_t, \; j\in\zp.
\end{aligned}$$
Denote by $R=R(\theta,\eta)$ for simplicity.
Clearly, $R$ lies in the category $\catn$ with $k=0$.
Moreover, by the PBW theorem, $R$ is
isomorphic to $\C[L_0]$ as vector spaces.
From the equation
\begin{eqnarray}
(L_{i-pd_i}-\theta_i)L_0^n=\theta_i\sum_{j=0}^{n-1}\binom nj
   (-i+pd_i)^{n-j}L_0^j,
   \end{eqnarray}
for any $n\in\zp$, $i\in S$,
it is showed that the
$\L^+(\ddd)$-module $R$ is simple.

\subsection{Simple $\L^+(\ddd)$-modules with $k>0$}

In this subsection we fix $k\in\zp$ and
$\ddd=(d_1,\dots,d_{p-1})\in\Z^{p-1}$.

Let $\mathcal{Q}$ be the set of all
$(p+1)$-tuples $(S_0,S_1,\dots, S_{p-1},\Theta)$
where $S_i\subseteq\{1,2,\dots,k\}$, $0\leq i\leq p-1$,
and $\Theta=\{\theta_{i,j}\in\C\mid 0\leq i\leq p-1, j\in S_{i}\}$
satisfy the following conditions {\bf(Q1)} to {\bf(Q3)}:
\begin{description}
	\item[(Q1)] $k\in S_i$ and $\theta_{i,k}\neq0$ for all $0\leq i\leq p-1$.
	\item[(Q2)] for any $i\in \ov{S_0}:=\{0,1,\dots,k\}\setminus S_0$,
	we have $k-i\in S_t$ for some $1\leq t\leq p-1$.
	\item[(Q3)] for any $1\leq j\leq p-1$ and
$i\in \ov{S_{j}}:=\{0,1,\dots,k\}\setminus S_j$, we have $k-i\in S_0$.
\end{description}

For any $\ell_{0}\in\C$ and $(p+1)$-tuple
 $(S_0,S_1,\dots, S_{p-1},\Theta)\in\mathcal{Q}$,
consider the $\L^+(\ddd)$-module
$Q(\ell_{0},S_0,S_1,\dots, S_{p-1},\Theta):=\U(\L^+(\ddd))/I$,
where $I$ is the left ideal of $\U(\L^+(\ddd))$ generated by
$$\begin{aligned}
	&C_0-p^{2}\ell_{0},\ C_1,\dots, C_{\halfp},\ L_{pi}-\theta_{0,i},
\ L_{pj},\hspace{5mm} i\in S_0, j>k;\\
	&L_{t+p(l-d_t)}-\theta_{t,l},\ L_{t+p(n-d_t)},
\hspace{5mm} 1\leq t\leq p-1,\ l\in S_t,\ n>k.
\end{aligned}$$
For simplicity,
we write $Q:=Q(\ell_{0},S_0,S_1,\dots, S_{p-1},\Theta)$.
Then $Q\in\catn$.

Set
$T_1=\{L_{t+p(l-d_t)}\mid 1\leq t\leq p-1, l\in\ov{S_{t}}\}$,
 $T_2=\{L_{pi}\mid i\in\ov{S_0}\}$
 and $T=T_1\cup T_2$ (they are all finite sets).
Denote by $\C[T]$ the (non-commuting) polynomial ring with variables in $T$.
Let $\bff 1$ be the multiplicative unity in $\C[T]$.
By the PBW theorem, $Q$ is isomorphic to $\C[T]$ as vector spaces.
 For convenience we identify $Q$ with $\C[T]$.
  For a monomial $f\in\C[T]$, set
$$T(f)=\{L_i\in T\mid L_i\mbox{ divides } f\},\ \ \ \
   T_i(f)=T_i\cap T(f),\ i=1,2.$$
Let $L_m,L_n\in T(f)$. \\
{\bf Convention:} $f$ is always formulated in the way
  that $L_m$ appears in front of $L_n$
  if one of the following statements stands:
\begin{description}
	\item[(i)] $L_m\in T_1, L_n\in T_2$.
	\item[(ii)] $L_m, L_n\in T_2$ and $m>n$.
	\item[(iii)] $m=i+p(j-d_i), n=t+p(s-d_t)$ for some
$1\leq i,t\leq p-1,\ j\in\ov{S_i},\ s\in\ov{S_t}$
 such that $j>s$ or $j=s$ and $i>t$.
\end{description}
Under such a convention, we denote by $P(T)$,
$P(T_1)$
and $P(T_2)$
the set of all monomials with variables in $T$,
$T_1$, $T_2$ respectively.
Then
\begin{eqnarray}
P(T)=P(T_1)P(T_2)=\{fg\mid f\in P(T_1), g\in P(T_2)\}
\end{eqnarray}
 is a basis of $Q=\C[T]$.
Set $\deg {\bff1}=0$, and $\mbox{deg}(L_{n})=1$ for any $L_{n}\in T$.
For $f\in P(T)\setminus T$, suppose $f=L_nf_1$ for some $n\in\Z, f_1\in P(T)$,
we define the {\em degree} $\deg f$ of $f$ recursively by
\begin{eqnarray}
\deg f=1+\deg{f_1}.
\end{eqnarray}
For example, we have $\deg{L_{n_{1}}^{i_{1}}}=i_{1}$,
$\deg{L_{n_{1}}^{i_{1}}\cdots L_{n_{m}}^{i_{m}}}=i_{1}+\cdots+i_{m}$
for $i_{1},\ldots,i_{m}\in\Z_{+}$, $L_{n_{1}},\ldots,L_{n_{m}}\in T$.

\begin{prop}
	For any $\ell_{0}\in\C$
and $(S_0,S_1,\dots, S_{p-1},\Theta)\in\mathcal{Q}$,
 the $\L^+(\ddd)$-module $Q:=Q(\ell_{0},S_0,S_1,\dots, S_{p-1},\Theta)$ is simple.
\end{prop}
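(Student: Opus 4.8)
The plan is to show that any nonzero submodule of $Q$ contains the cyclic generator $\bff 1$ (the image of $1$ in $\U(\L^+(\ddd))/I$), from which simplicity follows since $\U(\L^+(\ddd))\bff 1 = Q$. Concretely, I would take an arbitrary nonzero $v\in Q$ and, using the basis $P(T)=P(T_1)P(T_2)$, write $v$ as a finite $\C$-linear combination of monomials $fg$ with $f\in P(T_1)$, $g\in P(T_2)$. Among all monomials appearing in $v$ with nonzero coefficient, select one, say $f_0g_0$, of maximal degree (breaking ties by a convenient total order refining $\deg$, e.g.\ lexicographic in the variables as ordered by the Convention). The goal is to produce an element of $\U(\L^+(\ddd))$ that, applied to $v$, strictly decreases this maximal degree while keeping $v$ nonzero; iterating then lands on a nonzero scalar multiple of $\bff 1$.

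The key computational step is the following "lowering" mechanism. For a variable $L_n\in T(f_0g_0)$ we apply a suitably chosen $L_m$ with $m\in Z(k,k-d_1,\dots,k-d_{p-1})$ (so that $L_m Q$-style relations in $I$ kill the "spectator" factors): by the defining relations \eqref{eq1.1}, $[L_m, L_n] = (n-m)L_{m+n}$ plus a central term, and by the choice of $m$ relative to the ideal generators, $L_{m+n}$ is either one of the $\theta$-scalars listed in the definition of $I$ (when $m+n$ lands in the "evaluated" set $S_0$ or the $S_t$) or lies in the annihilated set (when $m+n$ exceeds $k$ in the relevant coordinate). Thus applying $L_m$ to a monomial replaces $L_n$ by a scalar and leaves everything else essentially intact up to lower-degree terms coming from further commutators. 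One checks that the chosen $L_m$ annihilates all monomials of degree strictly below the maximum (because $L_m$ acts on $Q$ with "degree shift" at most $0$ after using the relations in $I$, and the relevant scalar products are controlled), so that $L_m v$ has maximal-degree part exactly $\theta\cdot (f_0g_0 / L_n)$ with $\theta\neq 0$ by condition \textbf{(Q1)}. This is exactly the recursive degree drop $\deg f = 1+\deg f_1$ set up just before the proposition.

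I expect the main obstacle to be organizing the bookkeeping so that the lowering step is always available, i.e.\ verifying that for every monomial $fg\neq\bff 1$ there is a choice of $m\in Z(k,k-d_1,\dots,k-d_{p-1})$ for which $[L_m,L_n]$ produces a \emph{nonzero} scalar (not zero) — this is where conditions \textbf{(Q1)}--\textbf{(Q3)} enter: \textbf{(Q2)} guarantees, for a factor $L_{pi}\in T_2$ with $i\in\ov{S_0}$, that some $k-i$ lies in an $S_t$ so the commutator hits an evaluated generator; \textbf{(Q3)} does the symmetric job for factors in $T_1$; and \textbf{(Q1)} ensures the very last reduction (the variable with index involving $k$) yields a nonzero scalar $\theta_{\cdot,k}$. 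A secondary technical point is handling the central elements $C_0-p^2\ell_0, C_1,\dots,C_{\halfp}$, but these act as fixed scalars on $Q$ by construction, so the $\delta_{m+n,0}$ central term in $[L_m,L_n]$ only contributes a harmless scalar and never obstructs the induction. Once the degree has been driven to $0$, $v$ is a nonzero multiple of $\bff 1$, hence $\U(\L^+(\ddd))v = Q$, proving simplicity.
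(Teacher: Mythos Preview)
Your overall plan --- reduce degree step by step until reaching a nonzero scalar multiple of $\bff 1$ --- is correct and is exactly the paper's strategy. You also correctly identify that (Q2) is what handles variables in $T_2$ and (Q3) is what handles variables in $T_1$. However, your lowering mechanism has a genuine gap: taking $L_m$ with $m\in Z(k,k-d_1,\dots,k-d_{p-1})$ yields nothing, because every such $L_m$ annihilates \emph{all} of $Q$, not just $\bff 1$. Indeed, for any variable $L_n\in T$ the bracket $[L_m,L_n]$ is either central (hence zero on $Q$) or a multiple of $L_{m+n}$ with $m+n$ again in $Z(k,k-d_1,\dots,k-d_{p-1})$: every $L_n\in T$ is of the form $L_{pi}$ with $0\le i\le k$ or $L_{t+p(l-d_t)}$ with $0\le l\le k$, and adding such an index to $m$ only pushes the relevant coordinate further past the threshold $k$. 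An induction on degree then gives $L_m f\,\bff 1=0$ for every $f\in P(T)$. So $m+n$ never lands in the evaluated set, and your sentence ``$L_{m+n}$ is either one of the $\theta$-scalars\dots'' is exactly where the argument breaks.

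The paper's fix goes in the opposite direction: instead of an annihilating $L_m$, one applies an $L_m$ that already acts on $\bff 1$ as a known scalar $\theta$, shifted by that scalar, i.e.\ one applies $L_m-\theta$. Concretely, for the smallest $i$ with some $L_{pi}\in T_2$ appearing in $v$, condition (Q2) supplies $t$ with $k-i\in S_t$, and one applies $L_{t+p(k-i-d_t)}-\theta_{t,k-i}$. This operator commutes (modulo annihilated terms) with $g_1$ and with the $T_2$-factors of index larger than $pi$, while $[L_{t+p(k-i-d_t)},L_{pi}]$ is a nonzero multiple of $L_{t+p(k-d_t)}$, which acts as the nonzero scalar $\theta_{t,k}$ by (Q1). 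Thus (Q1) is invoked at \emph{every} step, not only the last one: the commutator is deliberately aimed at level $k$. Once $v\in\C[T_1]$, a symmetric argument via (Q3) and $L_{p(k-i)}-\theta_{0,k-i}$ handles the $T_1$ variables.
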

\begin{proof}
	Let $v$ be a nonzero vector in $Q$.
	Write $v=\sum_{g\in P(T)}a_gg$, where $a_g\in\C$
and only finite many of them are nonzero.
Denote $\supp v=\{g\in P(T)\mid a_g\neq 0\}$,
and $\deg v=\max\{\deg g\mid g\in\supp v\}$.
For any $g\in P(T)$,
let $g_i\in P(T_i),\ i=1,2$ be such that $g=g_1g_2$.
Note that $g_1,g_2$ are unique.
We claim that $\C[T_1]\cap\U(\L^+(\ddd))v\neq0$.
	If $v\notin\C[T_1]$, then the set
$X=\{j\in\ov{S_0}\mid L_{pj}\in T_2(g) \text{ for some }g\in\supp v\}$ is nonempty.
 Let $i$ be the smallest integer in $X$.
For any $g\in\supp v$ we may write $g_2=g_2'L_{pi}^n$,
where $n\geq 0$ and $L_{pi}\notin T_2(g_2')$.
Since $i\in\ov{S_0}$, by {\bf (Q2)} we see that
there exists some $1\leq t\leq p-1$ such that $k-i\in S_t$.
Notice that $[L_{t+p(k-i-d_t)},g_1g_2']=0$ on $Q$.
We have
	\begin{eqnarray}
		(L_{t+p(k-i-d_t)}-\theta_{t,k-i})v
		&= &\sum_{g\in\supp v}a_gg_1g_2'[L_{t+p(k-i-d_t)},L_{pi}^n] \nonumber\\
		&=&-\sum_{g\in\supp v}a_g\theta_{t,k}
\Big(t+p(k-i-d_t)\Big)\frac{\p g}{\p(L_{pi})},
	\end{eqnarray}
	which is obviously nonzero, and has degree less than $\deg v$.
	Repeat this progress, one gets the claim.

	Now for $v\in\C[T_1]\setminus\C\bff1$,
	write $v=\sum_{g\in P(T_1)}a_gg$.
 Let $L_{j+p(i-d_j)}\in T_1$ be the rightmost element
 in the set $\cup_{g\in\supp v}T_1(g)$ with respect to ${\bf (iii)}$.
	Now for any $g\in\supp v$ we may write as the form
	\begin{eqnarray}
g=g'\prod_{l=j}^{p-1}L_{l+p(i-d_l)}^{n_l},
\end{eqnarray}
	where $n_l\in\N$ (with $n_{j}\in\Z_{+})$ and
$g'\in P(T_1)$ is a product of
 $L_{t+p(s-d_t)}\in T_1$ with $s>i, 1\leq t\leq p-1$.
Since $i\in\ov{S_j}$, we have $k-i\in S_0$ by {\bf (Q3)}.
 Note that $[L_{p(k-i)},g']=0$ on $Q$.
  We have
	\begin{eqnarray}
\begin{aligned}
		(L_{p(k-i)}&-\theta_{0,k-i})v
=\sum_{g\in\supp v}a_gg'[L_{p(k-i)},\prod_{l=j}^{p-1}L_{l+p(i-d_l)}^{n_l}]\\
		&=\sum_{g\in\supp v}a_g\left(\sum_{l=j}^{p-1}
\theta_{l,k}\Big(l+p(i-d_l)\Big)\frac{\p g}{\p(L_{l+p(i-d_l)})}\right),
	\end{aligned}
\end{eqnarray}
	which is again nonzero and has degree less than $\deg v$.
 Repeat this computation and we see that $Q$ can be generated by any nonzero vector.
	So $Q$ is simple.
\end{proof}

The above two subsections give explicit constructions
of simple $\L^+(\ddd)$-modules $Q$ and $R$
in the category $\catn$ respectively,
from which we get simple restricted $\L$-modules
$\text{Ind}_{\el}(Q)$ and $\text{Ind}_{\el}(R)$
of level $\el=(\ell_{0},0,\ldots,0)\in\C^{\halfp+1}$.

\subsection{Whittaker modules}

In this subsection,
we study Whittaker modules over $\L$ of level
$\el$.
Irreducible Whittaker modules in certain cases
are explained to be the above constructed modules
with $0\leq k\leq 2$.

Let $\mathcal C=\spanc{C_0,C_1,\dots,C_{\halfp}}$.
Recall $\L^+=\sum\limits_{m>0}\C L_m$ from (\ref{eq4.3}).
Let $\vf:\L^+\oplus\mathcal C\longrightarrow\C$ be a Lie algebra homomorphism.
A {\em Whittaker module of type $\vf$} over $\L$ is a $\L$-module
 generated by a vector $v$ such that $x.v=\vf(x)v$ for any $x\in \L^+\oplus\mathcal C.$
Denote by $\C_\vf$ the one dimensional $(\L^+\oplus\mathcal C)$-module defined by
$$x.1=\vf(x)\hspace{5mm}\text{for any }x\in \L^+\oplus\mathcal C.$$
The induced $\L$-module
\begin{eqnarray}
W(\vf)=\U(\L)\otimes_{\U(\L^+\oplus\mathcal C)}\C_\vf
\end{eqnarray}
is called the {\em universal Whittaker module} of type $\vf$.
It is easy to see that $\vf(L_i)=0$ for $i>p$ and $i\neq 2p$.
We require $\vf(C_i)=0$ for all $1\leq i\leq \halfp$ in the following.
Then one can prove that $W(\vf)$ is irreducible
if and only if $\vf(L_i)\neq0$ for all $1\leq i\leq p-1$.

If $\vf(L_{2p})\neq 0$, then irreducible quotients of $W(\vf)$ are
the $\L$-modules constructed in Subsection 5.3 with $k=2$.
Here is an explicit example.
Suppose $\vf(L_{2p})\neq0$ and $\vf(L_i)\neq0,\ 1\leq i\leq p-1$.
Then $W(\vf)\cong\text{Ind}_{\el}(Q(\ell_0,S_0,S_1,\dots, S_{p-1},\Theta))$
with $p^{2}\ell_0=\vf(C_0),\ \el=(\ell_0,0,\dots,0)\in\C^{\halfp+1}$,
 $S_0=\{1,2\}$, $S_1=\dots=S_{p-1}=\{2\}$,
$\Theta=\{\theta_{i,j}\in\C \ | \ 0\leq i\leq p-1,j\in S_{i}\}$
 such that $\theta_{0,1}=0$, $\theta_{0,2}=\vf(L_{2p})$,
  $
 \theta_{i,2}=\vf(L_i),\ 1\leq i\leq p-1$.

If $\vf(L_{2p})=0$ and $\vf(L_{p})\neq 0$,
then irreducible quotients of $W(\vf)$
 are the $\L$-modules constructed in Subsection 5.3 with $k=1$.
When $\vf(L_{2p})=\vf(L_{p})=0$, irreducible quotients of $W(\vf)$ are
the $\L$-modules constructed in Subsection 5.2.

\end{document}